\numberwithin{equation}{section}
\newtheorem{theorem}{Theorem}[section]
\newtheorem{definition}[theorem]{Definition}
\newtheorem{lemma}[theorem]{Lemma}
\newtheorem{proposition}[theorem]{Proposition}
\newtheorem{corollary}[theorem]{Corollary}
\newtheorem{case}{Case}
\newtheorem{claim}{Claim}
\numberwithin{equation}{section}
\def\bfw{\mathbf{w}}
\def\al{\alpha}
\def\be{\beta}
\def\hh{\mathfrak{h}}
\def\gg{\mathfrak{g}}
\def \<{\langle}
\def \>{\rangle}
\def\GG{\mathcal{G}}
\def\UU{\mathcal{U}}
\def\LL{\mathcal{L}}
\def\WW{\mathcal{W}}
\newcommand{\C}{\mathbb C}
\newcommand{\N}{\mathbb{N}}
\newcommand{\Z}{\mathbb{Z}}
\def\Supp{\mathrm{Supp}}
\begin{document}
\title[planar Galilean conformal algebra]{Tensor product modules over the planar Galilean conformal algebra from free modules of rank one}
\author{Jin Cheng}
\address{J. Cheng: School of Mathematics and Statistics, Shandong Normal University,
	Jinan, Shandong 250014, P. R. China}
\email{cheng934@mail.ustc.edu.cn}
\author{Dongfang Gao}
  \address{D. Gao: Chern Institute of Mathematics and LPMC, Nankai University, Tianjin 300071, P. R. China, and
 Institut Camille Jordan, Universit\'{e} Claude Bernard Lyon 1, Lyon, 69622, France}
  \email{gao@math.univ-lyon1.fr}
 \author{Ziting Zeng}
\address{Z. Zeng: Laboratory of Mathematics and Complex Systems,  School of Mathematical Science, Beijing Normal University, Beijing 100875, P. R. China}
\email{zengzt@bnu.edu.cn}
  \keywords{planar Galilean conformal algebra; tensor product module; free module; irreducible module; isomorphism}
  \subjclass[2020]{17B10, 17B65, 17B66, 17B68}

\maketitle

\begin{abstract}
In this paper, we investigate the irreducible tensor product modules over the planar Galilean conformal algebra $\GG$ named by Aizawa,
which is the infinite-dimensional Galilean conformal algebra introduced by Bagchi-Gopakumar in $(2+1)$ dimensional space-time.
We give the necessary and sufficient conditions  for the tensor product modules of any two of $\UU(\hh)$-free modules of rank one over $\GG$ to be irreducible,
where $\hh$ is the Cartan subalgebra of $\GG$.
 Furthermore, the isomorphism classes of these irreducible tensor product modules are determined. 
 As an application, we obtain the necessary conditions for the tensor product modules of any two of $\UU(\C L_0)$-free modules of rank one over Witt algebra and Heisenberg-Virasoro algebra to be irreducible.
\end{abstract}

\raggedbottom
\section{Introduction}

Infinite-dimensional Galilean conformal algebras were introduced by physicists Bagchi and Gopakumar (see \cite{BG}) when they studied the AdS/CFT conjecture, which was introduced by Maldacena in \cite{Ma}.
Moreover, those algebras appear in the context of Galilean electrodynamics (see \cite{BBM, FHHO}) and are related to Navier-Stokes equations (see \cite{BMW}).
Recall that the infinite-dimensional Galilean conformal algebra in $(d+1)$ dimensional space-time is spanned by $$\{L^{(m)}, J_{ij}^{(m)}, M_i^{(m)}~|~m\in\Z, i,j=1,2,\cdots,d\},$$
where
\begin{align*}
&L^{(m)}=-(m+1)t^m\sum_{i=1}^dx_i\partial_i-t^{m+1}\partial_t,\\
&J_{ij}^{(m)}=-t^m(x_i\partial_j-x_j\partial_i),\\
&M_i^{(m)}=t^{m+1}\partial_i.
\end{align*}
The Lie brackets are defined as follows
\begin{align}
&[L^{(m)},L^{(n)}]=(m-n)L^{(m+n)},\notag\\
&[J_{ij}^{(m)}, J_{rs}^{(n)}]=\delta_{ir}J_{js}^{(m+n)}+\delta_{is}J_{rj}^{(m+n)}+\delta_{jr}J_{si}^{(m+n)}+\delta_{js}J_{ir}^{(m+n)},\label{IGCA}\\
&[L^{(m)},J_{ij}^{(n)}]=-nJ_{ij}^{(m+n)},\qquad [L^{(m)},M_i^{(n)}]=(m-n)M_i^{(m+n)},\notag\\
&[J_{ij}^{(m)},M_k^{(n)}]=-(\delta_{jk}M_i^{(m+n)}-\delta_{ik}M_j^{(m+n)}),\quad [M_i^{(m)},M_j^{(n)}]=0,\notag
\end{align}
where $m,n\in\Z, 1\leq i,j,k, r,s\leq d$.
Note that the subalgebra spanned by $$\{L^{(m)}, J_{ij}^{(0)}, M_i^{(m)}~|~m=0,\pm1, i,j=1,2,\cdots,d\},$$
is exactly the finite-dimensional Galilean conformal algebra in $(d+1)$ dimensional space-time. 
So infinite-dimensional Galilean conformal algebra is a very natural infinite-dimensional extension of finite-dimensional Galilean conformal algebra (see \cite{BG} for more details).
Moreover, from Eq.~\eqref{IGCA} we see that infinite-dimensional Galilean conformal algebras are related to many well-known infinite-dimensional Lie algebras, 
for example, the affine Lie algebras (see \cite{Car, Kac}), the Virasoro algebra (see \cite{Vir}), the Heisenberg-Virasoro algebra (see \cite{ADKP}), the $W$-algebra $W(2,2)$ (see \cite{ZD}) and so on.
Therefore infinite-dimensional Galilean conformal algebras are really important in mathematics and mathematical physics rather than artificial. 
In this paper, we mainly concern the representation theory of infinite-dimensional Galilean conformal algebra in $(2+1)$ dimensional space-time, which was named the planar Galilean conformal algebra by Aizawa in \cite{A}. This algebra is also the special case of \cite{MT}. 

Weight modules have been the popular for many Lie algebras with the triangular decompositions.
In particular, to some extent, Harish-Chandra modules (weight modules with finite-dimensional weight spaces) are well understood for many infinite-dimensional Lie algebras, for example, 
the Virasoro algebra (see \cite{As, LZ, M, S}), the Heisenberg-Virasoro algebra (see \cite{LG, LZ1}), 
the affine Kac-Moody Lie algebras (see \cite{CP, GZ}) and the Witt algebra of rank $n$ (see \cite{BF}).
There are also some results about weight modules with infinite-dimensional weight spaces (see \cite{BBFK, ChenGZ, GaoZ, MZ}).

For the last two decades, two families of non-weight modules attract more attentions from mathematicians,
called Whittaker modules and $\UU(\hh)$-free modules respectively, 
where $\hh$ is the Cartan subalgebra of the Lie algebra.
Whittaker modules were introduced by Kostant (see \cite{K}) for finite-dimensional complex semisimple Lie algebras. Actually, these modules for $\mathrm{sl_2(\C)}$ were earlier constructed by Arnal and Pinczon in \cite{AP}. 
The notation of $\UU(\hh)$-free modules was first introduced by Nilsson in \cite{Nil} for the simple Lie
algebra $\mathrm{sl_{n+1}(\C)}$. At the same time, these modules were introduced in a very different approach in
\cite{TZ}.

For the planar Galilean conformal algebra $\GG$,
Verma modules and coadjoint representations were studied in \cite{A},
Harish-Chandra modules can be deduced from \cite{CLW}, 
 Whittaker modules were investigated in \cite{CYY, G}, restricted modules under the certain conditions were characterized in \cite{CY, GG}, $\UU(\hh)$-free modules of rank one over $\GG$ were completely classified in \cite{CGZ}.
In order to obtain new irreducible $\GG$-modules, one of the methods is to consider the tensor product modules of known modules. 
But it is generally not easy to determine the conditions for the tensor product modules to be irreducible.
In particular, as far as we know, 
there are few of conclusions about the tensor products of $\UU(\hh)$-free modules for any infinite-dimensional Lie algebras.
However, in the present paper, we give some complete results about the tensor product modules of any two of $\UU(\hh)$-free modules of rank one over $\GG$.

The paper is organized as follows.
In Section 2, we recall notations related to the planar Galilean conformal algebra and
collect some results about $\UU(\hh)$-free modules over $\GG$ for later use.
In particular, we review that there are three families of $\UU(\hh)$-free modules of rank one over $\GG$,
denoted by $\Omega(\lambda,\eta,\sigma,0), \Omega(\lambda,\eta,0,\sigma)$ and $\Omega(\lambda,\delta,0,0)$
respectively (see Lemma \ref{three-families}).
In Section 3, we give the necessary and sufficient conditions for the tensor product module $\Omega(\lambda_1,\eta_1,\sigma_1,0)\otimes\Omega(\lambda_2,\eta_2,0,\sigma_2)$ to be irreducible, see Theorem \ref{irr-12}.
Also, the isomorphism classes of these irreducible tensor product modules are determined, see Theorem \ref{iso-12}.
In Section 4, the irreducible criteria and isomorphism classes of the tensor product modules $\Omega(\lambda_1,\eta_1,\sigma_1,0)\otimes\Omega(\lambda_2,\eta_2,\sigma_2,0)$ and $\Omega(\lambda_1,\eta_1,0,\sigma_1)\otimes\Omega(\lambda_2,\eta_2,0,\sigma_2)$ are obtained, see Theorems \ref{irr-11}, \ref{irr-22}, \ref{iso-11} and \ref{iso-22}.
In Section 5, we deduce some results about tensor product modules over the Witt algebra and the Heisenberg-Virasoro algebra.

Throughout this paper, we denote by $\Z, \Z_+, \N, \C$ and $\C^*$ the sets of integers, positive integers,  non-negative integers, complex numbers and nonzero complex numbers respectively.
All vector spaces and algebras are over $\C$.
We denote by $\UU(\gg)$ the universal enveloping algebra of a Lie algebra $\gg$.

\section{Notations and preliminaries}
In this section, we define some notations and collect some known results about free modules of rank one over $\GG$.

By Eq.~\eqref{IGCA} we see that infinite-dimensional Galilean conformal algebra in $(2+1)$ dimensional space-time is spanned by 
$$\{L^{(m)}, J_{12}^{(m)}, M_1^{(m)}, M_2^{(m)}~|~m\in\Z\},$$
satisfying the following Lie brackets
\begin{align*}
&[L^{(m)},L^{(n)}]=(m-n)L^{(m+n)},\qquad [L^{(m)},J_{12}^{(n)}]=-nJ_{12}^{(m+n)},\\
&[L^{(m)},M_1^{(n)}]=(m-n)M_1^{(m+n)},\quad\ \ [L^{(m)},M_2^{(n)}]=(m-n)M_2^{(m+n)},\\
&[J_{12}^{(m)},M_1^{(n)}]=M_2^{(m+n)},\qquad\qquad\ \  [J_{12}^{(m)},M_2^{(n)}]=-M_1^{(m+n)}, \\
&[J_{12}^{(m)}, J_{12}^{(n)}]=[M_1^{(m)},M_1^{(n)}]=[M_2^{(m)},M_2^{(n)}]=[M_1^{(m)},M_2^{(n)}]=0,\ \ \ {\rm for \ all \ }\  m,n\in\Z.
\end{align*}
For convenience, we would like to simplify the notations (see \cite{CYY}). Let
$$L_n=-L^{(n)},\ \ H_n=\sqrt{-1}J_{12}^{(n)},\ \ I_n=M_1^{(n)}+\sqrt{-1}M_2^{(n)},\ \ J_n=M_1^{(n)}-\sqrt{-1}M_2^{(n)}, {\rm \ \ for\ \ all\ \ } n\in\Z.$$
We now can describe the definition of the planar Galilean conformal algebra as follows.
\begin{definition}\label{Galilean}
	The {\bf planar Galilean conformal algebra} $\GG$ is an infinite-dimensional Lie algebra with a basis 
	$\{L_m, H_m,I_m,J_m~|~m\in\Z\}$ subject to the following commutation relations
\begin{equation}\label{planar Galilean conformal algebra}
\begin{split}
&[L_m, L_n]=(n-m)L_{m+n},\quad [L_m, H_n]=nH_{m+n}, \quad [L_m, I_n]=(n-m)I_{m+n},\\
&[L_m, J_n]=(n-m)J_{m+n},\quad [H_m,I_n]=I_{m+n},\qquad \  [H_m,J_n]=-J_{m+n},\\
&[H_m, H_n]=[I_m, I_n]=[J_m, J_n]=[I_m, J_n]=0, \quad {\rm \ \ for\ \ all\ \ } m,n\in\Z.
\end{split}
\end{equation}
\end{definition}

It is clear that $\GG$ contains the several important subalgebras.
\begin{enumerate}
	\item Witt algebra: the subalgebra $\WW$ spanned by $\{L_m~|~m\in\Z\}$ is the Witt algebra,
	which is the derivation Lie algebra of Laurent polynomial algebra of one variable.
	\item Heisenberg-Virasoro algebra: the subalgebra $\LL$ spanned by $\{L_m, H_m~|~m\in\Z\}$ is the Heisenberg-Virasoro algebra,
	which was introduced by E. Arbarello, C. De Concini, V. G. Kac and C. Procesi in \cite{ADKP}, where the authors established a canonical isomorphism between the second cohomology of the certain Lie algebra
	and the second singular cohomology of the certain moduli space.
	\item $W$-algebra $W(2,2)$: the subalgebra $W(2,2)$ spanned by $\{L_m, I_m~|~m\in\Z\}$ or $\{L_m, J_m~|~m\in\Z\}$ is the centerless $W$-algebra $W(2,2)$, which was introduced by Zhang and Dong in order to classify the certain simple vertex operator algebras (see \cite{ZD}). 
\end{enumerate}

\begin{definition}
	Let $M$ be a $\GG$-module. Then $M$ is called a $\UU(\hh)$-{\bf free module of rank one} over $\GG$ if $M$ is free of rank one as $\UU(\hh)$-module, where $\hh=\C L_0+\C H_0$ is the Cartan subalgebra of $\GG$.
\end{definition}

From \cite{CGZ} we have the following lemma.
\begin{lemma}\label{three-families}
	\begin{enumerate}[$(1)$]
		\setlength{\itemindent}{-1.6em}\item Let $\lambda\in\C^*,\eta\in\C,\sigma\in\C[X]$ with $\sigma\ne 0$. 
		Then the polynomial algebra $\C[X,Y]$ is a $\GG$-module with the following actions
		\begin{align*}\label{one}
		&L_m(f(X,Y))=\lambda^{m}(Y-mX+m\eta)f(X,Y-m),\notag\\
		&H_m(f(X,Y))=\lambda^mXf(X,Y-m),\\
		&I_m(f(X,Y))=\lambda^m\sigma f(X-1,Y-m),\notag\\
		&J_m(f(X,Y))=0,\ \ {\rm \ for\ \ all\ \ } m\in\Z,\  f(X,Y)\in \C[X,Y].\notag
		\end{align*}
	    This module is denoted by $\Omega(\lambda,\eta,\sigma,0)$.
		\item Let $\lambda\in\C^*, \eta\in\C, \sigma\in\C[S]$ with $\sigma\ne 0$. 
		Then the polynomial algebra $\C[S,T]$ is endowed with a $\GG$-module structure by the following actions
		\begin{equation}\label{two}
		\begin{split}
		&L_m(f(S,T))=\lambda^{m}(T+mS+m\eta)f(S,T-m),\\
		&H_m(f(S,T))=\lambda^mSf(S,T-m),\\
		&I_m(f(S,T))=0,\\
		&J_m(f(S,T))=\lambda^m\sigma f(S+1,T-m), \ \ {\rm \ for\ \ all\ \ }m\in\Z,\ f(S,T)\in \C[S,T]. \notag
		\end{split}
		\end{equation}
		This module is denoted by $\Omega(\lambda,\eta,0,\sigma)$.
		\item Let $\lambda\in\C^*,\delta\in\C[P]$. 
		Then the polynomial algebra $\C[P,Q]$ has a $\GG$-module structure via the following actions
		\begin{equation*}\label{one}
		\begin{split}
		&L_m(f(P,Q))=\lambda^{m}(Q+m\delta)f(P,Q-m),\\
		&H_m(f(P,Q))=\lambda^mPf(P,Q-m),\\
		&I_m(f(P,Q))=J_m(f(P,Q))=0, \ \ {\rm \ for\ \ all\ \ }m\in\Z,\ f(P,Q)\in \C[P,Q].
		\end{split}
		\end{equation*}
	 We denote this module by $\Omega(\lambda,\delta,0,0)$.
	\end{enumerate}
\end{lemma}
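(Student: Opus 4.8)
The lemma asserts that each of the three prescriptions makes the relevant polynomial ring into a $\GG$-module which is free of rank one over $\UU(\hh)$, and the plan is a direct two-stage verification. For the module axioms one checks that the assigned operators obey every bracket in \eqref{planar Galilean conformal algebra} after being applied to an arbitrary polynomial. In $\Omega(\lambda,\eta,\sigma,0)$, for instance, every generator acts on $f=f(X,Y)\in\C[X,Y]$ by ``multiply by a coefficient polynomial, then shift $Y$ by the index'' (with $I$ additionally sending $X\mapsto X-1$ and $J$ acting by $0$), so a composite $A_mB_n$ sends $f$ to
\[
c_{A,m}(X,Y)\,c_{B,n}(X,Y-m)\,f(X,Y-m-n),
\]
where $c_{A,m},c_{B,n}$ denote the coefficient polynomials. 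Subtracting the version with $m$ and $n$ interchanged, the doubly shifted factor $f(X,Y-m-n)$ comes out and one is left with an elementary polynomial identity; e.g.\ for $[L_m,L_n]$ the difference of coefficients collapses to $(n-m)\bigl(Y-(m+n)X+(m+n)\eta\bigr)$, which is exactly the coefficient of $L_{m+n}$. The same mechanism settles $[L_m,H_n]$, $[L_m,I_n]$, $[H_m,I_n]$, while $[H_m,H_n]=[I_m,I_n]=[I_m,J_n]=[J_m,J_n]=0$ is immediate because $H$ and $I$ only affect $X$ and $J$ is zero. Freeness is then automatic: in each family $L_0$ and $H_0$ act as multiplication by the two polynomial variables (here $L_0f=Yf$ and $H_0f=Xf$), so $\UU(\hh)\cdot 1$ is the whole ring and $1$ is a free generator.

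To avoid repeating the bracket check, I would exploit the automorphism $\tau$ of $\GG$ that fixes every $L_m$, sends $H_m\mapsto -H_m$, and interchanges $I_m\leftrightarrow J_m$; one checks at once that $\tau$ respects \eqref{planar Galilean conformal algebra}. Pulling $\Omega(\lambda,\eta,\sigma,0)$ back along $\tau$ and relabelling $X\mapsto -S$, $Y\mapsto T$ (so that $X-1$ becomes $-(S+1)$) reproduces $\Omega(\lambda,\eta,0,\sigma)$ precisely, so family $(2)$ inherits both the module structure and the freeness from family $(1)$. Family $(3)$ still needs its own, shorter, computation: there $I_m=J_m=0$, so the constraints coming from $[L_m,H_n]=nH_{m+n}$ and $[H_m,I_n]=I_{m+n}$ no longer force the $-mX$ term in the coefficient of $L_m$, which is why $\eta$ is replaced by an arbitrary polynomial $\delta\in\C[P]$; the verification is a strict simplification of the one for family $(1)$.

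The argument is pure bookkeeping, so the only real hazards are two. First, in a composition the \emph{inner} operator's coefficient must be evaluated at $Y-m$ rather than at $Y$; this is precisely what makes the brackets land on the index $m+n$, and dropping it produces spurious terms. Second, since $H_m,I_m,J_m$ were built from $J_{12}^{(m)},M_i^{(m)}$ with factors of $\sqrt{-1}$, one must verify the relations of Definition \ref{Galilean}, not those of \eqref{IGCA}; after that the signs are forced. If one also wants these three families to exhaust, and be generically inequivalent among, the rank-one $\UU(\hh)$-free modules over $\GG$, that is the classification established in \cite{CGZ}, which I would simply cite.
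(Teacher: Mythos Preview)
Your proposal is correct. The paper itself does not prove this lemma at all; it simply quotes it from \cite{CGZ} (``From \cite{CGZ} we have the following lemma''), so your direct verification already goes beyond what the paper supplies. The bookkeeping you outline checks out, and the automorphism $\tau$ (fixing each $L_m$, negating $H_m$, interchanging $I_m\leftrightarrow J_m$) is a genuine shortcut not mentioned in the paper: it is an automorphism of $\GG$, and pulling $\Omega(\lambda,\eta,\sigma,0)$ back along $\tau$ with the relabelling $X\mapsto -S$, $Y\mapsto T$ produces $\Omega(\lambda,\eta,0,\sigma(-S))$, which sweeps out all of family~(2). One small correction to your side remark: the relation that pins down the $-mX$ term in the $L_m$-coefficient for family~(1) is $[L_m,I_n]=(n-m)I_{m+n}$, not $[L_m,H_n]$ or $[H_m,I_n]$; in $[L_m,H_n]$ the $-mX$ contributions cancel, and $[H_m,I_n]$ never sees the $L$-coefficient. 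Once $I$ acts by zero in family~(3) that constraint vanishes, which is exactly the freedom allowing an arbitrary $\delta\in\C[P]$---so your conclusion there is right even if the attributed cause is slightly off.
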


By (\cite[Theorem 4.12]{CGZ}) we know that 
the above three families of modules exhaust all $\UU(\hh)$-free modules of rank one over $\GG$ up to isomorphism.
Moreover, the following lemma holds by Theorems 4.13, 4.14 and 4.15 in \cite{CGZ}.
\begin{lemma}\label{U(h)-irr}
	Let $\lambda\in\C^*,\eta\in\C,\sigma\in\C[X], \sigma'\in\C[S]$ and $\delta\in\C[P]$ with $\sigma\ne 0, \sigma'\ne 0$. Then
	\begin{enumerate}[$(1)$]
		\item $\Omega(\lambda,\eta,\sigma,0)$ is an irreducible $\GG$-module if and only if $\sigma$ is a nonzero constant;
		\item $\Omega(\lambda,\eta,0,\sigma')$ is an irreducible $\GG$-module if and only if $\sigma'$ is a nonzero constant;
		\item $\Omega(\lambda,\delta,0,0)$ is always reducible as $\GG$-module.
	\end{enumerate}
\end{lemma}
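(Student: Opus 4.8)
The plan is to prove all three parts directly from the explicit actions recorded in Lemma~\ref{three-families}, in each case separating the easy direction (exhibit a visible proper submodule) from the one that needs an argument (show every nonzero submodule is everything). A useful preliminary remark, valid for all three families, is that $L_0$ and $H_0$ act on the ambient polynomial ring as multiplication by the two polynomial variables (e.g.\ on $\Omega(\lambda,\eta,\sigma,0)=\C[X,Y]$ one has $L_0 f=Yf$ and $H_0 f=Xf$), and the two variables generate the polynomial algebra; hence every $\GG$-submodule is automatically an ideal of that polynomial ring. This turns the irreducibility question into a question about ideals stable under the remaining operators.

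For part $(1)$: if $\deg\sigma\ge 1$, I would check directly from the formulas that $\sigma(X)\,\C[X,Y]$ is a $\GG$-submodule --- it is clearly stable under $L_m$, $H_m$ and $J_m(=0)$, and $I_m\bigl(\sigma(X)g\bigr)=\lambda^m\sigma(X)\sigma(X-1)g(X-1,Y-m)\in\sigma(X)\,\C[X,Y]$ --- and it is nonzero and proper (it misses the constant $1$) precisely when $\sigma$ is nonconstant, giving one implication. For the converse, assume $\sigma$ is a nonzero constant $c$ and let $V$ be a nonzero $\GG$-submodule with $0\ne f\in V$. Applying products $I_{m_1}\cdots I_{m_k}$ to $f$ gives, by Lemma~\ref{three-families}$(1)$, the element $c^k\lambda^{m_1+\cdots+m_k}f\bigl(X-k,\,Y-(m_1+\cdots+m_k)\bigr)$; since $\lambda\in\C^*$ and $m_1+\cdots+m_k$ can be made any integer (for each fixed $k\ge 1$), we conclude $f(X-k,Y-\ell)\in V$ for all $k\ge 1$ and all $\ell\in\Z$. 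If $V$ were proper it would lie in a maximal ideal $(X-a,Y-b)$, forcing $f(a-k,b-\ell)=0$ for all such $k,\ell$; fixing $k$ and letting $\ell$ run over $\Z$ forces $f(a-k,Y)\equiv 0$, i.e.\ $(X-(a-k))\mid f$ for $k=1,\dots,\deg_X f+1$, which is $\deg_X f+1$ pairwise coprime linear factors of $f$ --- impossible unless $f=0$. Hence $V=\C[X,Y]$ and $\Omega(\lambda,\eta,\sigma,0)$ is irreducible.

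Part $(2)$ is the mirror image: on $\Omega(\lambda,\eta,0,\sigma')=\C[S,T]$ the operators $L_0,H_0$ act as multiplication by $T,S$, the candidate proper submodule when $\deg\sigma'\ge 1$ is $\sigma'(S)\,\C[S,T]$, and when $\sigma'$ is a nonzero constant the products $J_{m_1}\cdots J_{m_k}$ produce all translates $f(S+k,T-\ell)$ with $k\ge 1$, $\ell\in\Z$, after which the same maximal-ideal argument gives irreducibility. For part $(3)$, on $\Omega(\lambda,\delta,0,0)=\C[P,Q]$ one checks from Lemma~\ref{three-families}$(3)$ that $P\,\C[P,Q]$ is stable under $L_m$ (using that $\delta=\delta(P)\in\C[P]$, so $(Q+m\delta(P))Pg=P\cdot(Q+m\delta(P))g$) and under $H_m$, while $I_m=J_m=0$; thus $P\,\C[P,Q]$ is a nonzero proper $\GG$-submodule no matter what $\delta$ is, so $\Omega(\lambda,\delta,0,0)$ is always reducible.

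The only step with real content is the irreducibility half of $(1)$ and $(2)$: one must verify carefully that composing the $I_m$'s (resp.\ $J_m$'s) genuinely recovers every translate in the $Y$- (resp.\ $T$-) direction --- this is exactly where $\lambda\in\C^*$ and the freedom to choose the exponents $m_i$ with arbitrary integer sum enter --- and that the family of translates of one nonzero polynomial generates the unit ideal. Everything else (the closure conditions making $\sigma(X)\C[X,Y]$, $\sigma'(S)\C[S,T]$, $P\,\C[P,Q]$ submodules, and the identification of maximal ideals of a two-variable polynomial ring) is routine. In particular this recovers Theorems~4.13--4.15 of \cite{CGZ} without appealing to their proofs.
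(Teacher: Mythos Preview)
Your argument is correct. The reduction to ideals via the observation that $L_0$ and $H_0$ act as multiplication by the two generators is clean, and the maximal-ideal argument for the irreducibility direction of $(1)$ and $(2)$ goes through exactly as you wrote: the composites $I_{m_1}\cdots I_{m_k}$ (resp.\ $J_{m_1}\cdots J_{m_k}$) really do produce all translates $f(X-k,Y-\ell)$ with $k\ge 1$, $\ell\in\Z$, and the divisibility contradiction forces $f=0$. The submodule checks for $\sigma(X)\,\C[X,Y]$, $\sigma'(S)\,\C[S,T]$ and $P\,\C[P,Q]$ are straightforward and valid.

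As for comparison: the paper does not prove this lemma at all --- it simply cites Theorems~4.13--4.15 of \cite{CGZ} and moves on. So your write-up is not an alternative to the paper's argument but rather a self-contained replacement for an external reference, which you already anticipated in your final sentence. If anything, your approach via the ideal structure and the Nullstellensatz is probably more conceptual than a bare-hands degree reduction; either way there is nothing in the present paper to set it against.
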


Therefore, in this paper, we mainly study the irreducibility of tensor product $\GG$-modules 
\begin{align*}
&\Omega(\lambda_1,\eta_1,\sigma_1,0)\otimes \Omega(\lambda_2,\eta_2,0,\sigma_2), \\
&\Omega(\lambda_1,\eta_1,\sigma_1,0)\otimes \Omega(\lambda_2,\eta_2,\sigma_2,0)
\end{align*}
and
\begin{align*}
\Omega(\lambda_1,\eta_1,0,\sigma_1)\otimes \Omega(\lambda_2,\eta_2,0,\sigma_2),
\end{align*}
where $\lambda_1,\lambda_2,\sigma_1,\sigma_2\in\C^*, \eta_1,\eta_2\in\C$,
since $\Omega(\lambda,\delta,0,0)$ is always a reducible $\GG$-module for any $\lambda\in\C^*, \delta\in\C[P]$.

Now we conclude this section by defining a total order on $\N^4$,
where $\N^4$ denotes the set of all vectors of the form $\bar{\al}:=(\al_1,\al_2,\al_3,\al_4)$ with entries in $\N$. 

For any $\bar{\al}\in\N^4$, denote
\begin{equation*}
\bfw(\bar{\al})=\al_1+\al_2+\al_3+\al_4,
\end{equation*}
which is a nonnegative integer.
Let $\bar{\al},\bar{\be}\in \N^4$. We say
\begin{align*}
\bar{\al} \succ \bar{\be},\quad {\rm\ if\ } & \bfw(\bar{\al})>\bfw(\bar{\be}),\\
{\rm\ or\ } &\bfw(\bar{\al})=\bfw(\bar{\be}), \al_4>\be_4,\\
{\rm\ or\ }& \bfw(\bar{\al})=\bfw(\bar{\be}), \al_4=\be_4, \al_3>\be_3,\\
{\rm\ or\ }& \bfw(\bar{\al})=\bfw(\bar{\be}), \al_4=\be_4, \al_3=\be_3, \al_2>\be_2,\\
{\rm\ or\ } &\bfw(\bar{\al})=\bfw(\bar{\be}), \al_4=\be_4, \al_3=\be_3, \al_2=\be_2, \al_1>\be_1.
\end{align*}
Obviously, the $\succ$ is a total order on $\N^4$.

\section{$\Omega(\lambda_1,\eta_1,\sigma_1,0)\otimes\Omega(\lambda_2,\eta_2,0,\sigma_2)$}\label{Sec3}
In this section, we study the tensor product $\GG$-module $\Omega(\lambda_1,\eta_1,\sigma_1,0)\otimes\Omega(\lambda_2,\eta_2,0,\sigma_2)$, where $\lambda_1,\lambda_2,\sigma_1,\sigma_2\in\C^*,\eta_1, \eta_2\in\C$.
More precisely, we give the necessary and sufficient condition for the $\GG$-module $\Omega(\lambda_1,\eta_1,\sigma_1,0)\otimes\Omega(\lambda_2,\eta_2,0,\sigma_2)$ to be irreducible.
Then the isomorphism classes of this family of irreducible tensor product $\GG$-modules are determined.

\subsection{Irreducibility}\label{sub3-1}
In this subsection, we determine the irreducibility of tensor product module $\Omega(\lambda_1,\eta_1,\sigma_1,0)\otimes\Omega(\lambda_2,\eta_2,0,\sigma_2)$ over $\GG$.
\begin{lemma}\label{nonzeroV}
	For any $\lambda_1, \lambda_2, \sigma_1, \sigma_2\in\C^*, \eta_1,\eta_2\in\C$,
	suppose that $V$ is a nonzero $\GG$-submodule of $\Omega(\lambda_1,\eta_1,\sigma_1,0)\otimes\Omega(\lambda_2,\eta_2,0,\sigma_2)$.
	Then $1\otimes 1\in V$.
\end{lemma}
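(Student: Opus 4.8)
The plan is to take a nonzero element $v \in V$, write it in the PBW-type basis of $\C[X,Y] \otimes \C[S,T]$, and use the $\GG$-action to successively simplify it until we reach a nonzero scalar multiple of $1 \otimes 1$. Concretely, write $v = \sum_{\bar\alpha} c_{\bar\alpha}\, X^{\alpha_1} Y^{\alpha_2} \otimes S^{\alpha_3} T^{\alpha_4}$ with finitely many nonzero $c_{\bar\alpha} \in \C$, and let $\bar\alpha^{\max}$ be the $\succ$-largest multi-index appearing. The strategy is to apply carefully chosen operators from $\GG$ — principally the ``raising/lowering'' generators $I_m$, $J_m$, $H_m$, $L_m$ and differences of them indexed over varying $m$ — to reduce $\bfw(\bar\alpha^{\max})$, eventually reaching a $v$ supported only at $\bar\alpha = \bfz$, i.e.\ a nonzero multiple of $1 \otimes 1$. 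Since $V$ is a submodule and $\C$ is a field, scaling then gives $1 \otimes 1 \in V$.

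**Key steps.** First I would record the explicit action of each generator on a monomial $X^{\alpha_1}Y^{\alpha_2} \otimes S^{\alpha_3}T^{\alpha_4}$ via the coproduct: e.g.\ $H_m$ multiplies the first tensor factor by $\lambda_1^m X$ (shifting $Y \mapsto Y-m$) and the second by $\lambda_2^m S$ (shifting $T \mapsto T-m$), while $I_m$ acts as $\lambda_1^m \sigma_1(X-1)$ on the first factor and as $\lambda_2^m$ times identity-with-shift (since $I_m$ kills the second) — here I use that $\sigma_1, \sigma_2$ are nonzero constants is \emph{not} assumed, so I must keep $\sigma_1 \in \C^*$ general; actually in this lemma $\sigma_1, \sigma_2 \in \C^*$ are constants, which is the crucial simplification. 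Second, I would use $I_m$ (which lowers the $X$-degree via $X \mapsto X-1$ while not raising any degree, and multiplies by the constant $\lambda_1^m\sigma_1$) together with the operator $J_m$ (which lowers $S$-degree via $S \mapsto S+1$ and multiplies by $\lambda_2^m\sigma_2$) to strip off the $X$- and $S$-dependence: applying $I_m$ enough times removes $X$, applying $J_m$ enough times removes $S$. Third, with $v$ now supported on monomials $1 \otimes 1 \cdot Y^{\alpha_2} \otimes T^{\alpha_4}$ (after absorbing constants), I would use combinations of $L_m$ and $H_m$ — more precisely operators like $L_m - (\text{something})H_m$ whose action on $Y$ and $T$ produces a degree drop — to lower the $Y$- and $T$-degrees one at a time. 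The standard device is: for fixed leading degree, take a suitable $\C$-linear combination $\sum_m a_m \cdot (\text{operator indexed by } m)$ and use a Vandermonde/finite-difference argument in the variable $\lambda^m$ to isolate and annihilate the top-degree part, strictly decreasing $\bfw(\bar\alpha^{\max})$ or the relevant lexicographic component. Iterating, $v$ collapses to a multiple of $1 \otimes 1$.

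**Main obstacle.** The hard part will be handling the interaction between the two tensor factors: a single generator $g \in \GG$ acts diagonally as $g \otimes 1 + 1 \otimes g$, so an operator that lowers a degree in the first factor may \emph{raise} a degree in the second (and vice versa), and one must verify that these cross-terms cannot conspire to keep the support from shrinking in the total order $\succ$. The reason the weight function $\bfw$ and the particular tie-breaking order on $\N^4$ were set up is exactly to make this bookkeeping work: one checks that each chosen operator, modulo strictly $\succ$-smaller terms, acts on the $\succ$-leading monomial by a nonzero scalar times a monomial that is either equal (allowing a Vandermonde elimination over the index $m$) or strictly smaller. So the real content is a finite but delicate case analysis showing that at each stage \emph{some} element of $\GG$ (or linear combination thereof) strictly reduces the leading data; once that is established, a well-founded induction on $(\bfw(\bar\alpha^{\max}); \alpha_4^{\max}, \alpha_3^{\max}, \alpha_2^{\max}, \alpha_1^{\max})$ finishes the proof. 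I expect the cleanest route is to first eliminate $\alpha_1$ (using $I_m$, which never raises any degree), then $\alpha_3$ (using $J_m$, likewise), and only then deal with $\alpha_2, \alpha_4$ using $L_m, H_m$, so that the messy cross-terms appear only in the last, lowest-dimensional stage.
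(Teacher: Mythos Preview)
Your overall strategy is right, and you correctly note that $I_m$ annihilates the second factor while $J_m$ annihilates the first. But you underuse this observation, and your third step has a genuine gap.

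The problem is that $L_m$ and $H_m$ both \emph{raise} the total weight $\bfw$ by one on every monomial: $H_m(Y^j\otimes T^l)$ has leading part $\lambda_1^m XY^j\otimes T^l + \lambda_2^m Y^j\otimes ST^l$, while $L_m(Y^j\otimes T^l)$ contributes in addition $\lambda_1^m Y^{j+1}\otimes T^l + \lambda_2^m Y^j\otimes T^{l+1}$. These four weight-$(j{+}l{+}1)$ monomials are linearly independent, so no combination of the form $L_m - cH_m$ (nor any finite Vandermonde combination over several values of $m$) cancels them all; your induction on $\bfw(\bar\alpha^{\max})$ therefore fails at this stage. The ``main obstacle'' you anticipate is real for $L_m$ and $H_m$, but it is \emph{entirely absent} for $I_m$ and $J_m$, and that is the simplification you miss: since $I_m$ sends $f(X,Y)\otimes g(S,T)$ to $\lambda_1^m\sigma_1\, f(X-1,Y-m)\otimes g(S,T)$, it shifts both $X$ and $Y$ in the first factor while leaving the second factor untouched, and symmetrically $J_m$ shifts both $S$ and $T$ in the second factor only. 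Hence $I_m$ and $J_m$ alone handle all four variables, with no cross-terms whatsoever. The paper's proof does exactly this: apply $\lambda_2^{-m}\sigma_2^{-1}J_m$ to $v$, expand as a polynomial in $m$, and use a Vandermonde system over $m=1,\dots,t{+}1$ to isolate the top $T$-degree piece $v_t$; then the difference $\lambda_2^{-m}\sigma_2^{-1}J_m v' - v'$ drops the $S$-degree by one. Iterating reaches an element of the form $\sum_{i,j}\gamma_{ij}X^iY^j\otimes 1$, and the symmetric argument with $I_m$ finishes. No $L_m$, no $H_m$, no cross-term bookkeeping.
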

\begin{proof}
	For any nonzero $v\in V$, we can write $v$ in the form 
	$$\sum_{i=0}^p\sum_{j=0}^q\sum_{k=0}^s\sum_{l=0}^t\alpha_{ijkl}X^iY^j\otimes S^kT^l,$$
	where $p,q,s,t\in\N, \alpha_{ijkl}\in\C$ and 
	$\sum_{i=0}^p\sum_{j=0}^q\sum_{k=0}^s\alpha_{ijkt}X^iY^j\otimes S^kT^t\ne 0$,
	which implies that 
	\begin{equation*}
	\sum_{i=0}^p\sum_{j=0}^q\sum_{k=0}^s\alpha_{ijkt}X^iY^j\otimes S^k\ne 0.
	\end{equation*}
	Denote $s'=\max\{ k~|~0\leq k\leq s, \sum_{i=0}^p\sum_{j=0}^q\alpha_{ijkt}X^iY^j\ne 0\}$.
	Thus 
	\begin{equation}\label{ij}
	\sum_{i=0}^p\sum_{j=0}^q\alpha_{ijs't}X^iY^j\ne 0,\quad  \alpha_{i'j's''t}=0,\quad 
	\forall 0\leq i'\leq p, 0\leq j'\leq q, s'<s''\leq s.
	\end{equation}
Let $m\in\Z$. We compute 
	\begin{align*}
	\lambda_2^{-m}\sigma_2^{-1}J_mv=\sum_{i=0}^p\sum_{j=0}^q\sum_{k=0}^s\sum_{l=0}^t\alpha_{ijkl}X^iY^j\otimes (S+1)^k(T-m)^l=\sum_{l=0}^tm^lv_l,
	\end{align*}
	where $v_0,v_1,\cdots, v_t\in \Omega(\lambda_1,\eta_1,\sigma_1,0)\otimes\Omega(\lambda_2,\eta_2,0,\sigma_2)$ are independent of $m$.
	Taking $m=1,2,\cdots, t+1$, then the coefficient matrix of $v_l$, where $l=0, 1, \cdots, t$, is a Vandermonde matrix.
	So $v_l\in V$ for any $0\leq l\leq t$. 
	In particular, 
	\begin{align*}
	v_t=\sum_{i=0}^p\sum_{j=0}^q\sum_{k=0}^{s'}(-1)^t\alpha_{ijkt}X^iY^j\otimes (S+1)^k
	=\sum_{i=0}^p\sum_{j=0}^q(-1)^t\alpha_{ijs't}X^iY^j\otimes S^{s'}+\text {lower-terms\ of\ } S,
	\end{align*}
	which is nonzero by Eq. ~\eqref{ij}.
	Hence we can assume that there exists nonzero 
	$$v'=\sum_{i=0}^p\sum_{j=0}^q\sum_{k=0}^s\beta_{ijk}X^iY^j\otimes S^k\in V,$$
	where $p,q,s\in\N, \beta_{ijk}\in\C$ and $\sum_{i=0}^p\sum_{j=0}^q\beta_{ijs}X^iY^j\otimes S^s\ne 0$. 
	If $s>0$, then for any $m\in\Z$, we have 
	\begin{align*}
	\lambda_2^{-m}\sigma_2^{-1}J_mv'-v'&=\sum_{i=0}^p\sum_{j=0}^q\sum_{k=0}^s\beta_{ijk}X^iY^j\otimes ((S+1)^k-S^k)\\
	&=\sum_{i=0}^p\sum_{j=0}^qs\beta_{ijs}X^iY^j\otimes S^{s-1}+\text {lower-terms\ of\ } S,
	\end{align*}
	which is nonzero. So we can get that there exists nonzero 
	$$v''=\sum_{i=0}^p\sum_{j=0}^q\gamma_{ij}X^iY^j\otimes 1\in V,$$
	where $p,q\in\N, \gamma_{ij}\in\C$. 
	Then using the action of $I_m$ on $v''$ where $m\in\Z$,  we can deduce that $1\otimes 1\in V$.
	This completes the proof of the lemma.
\end{proof}

\begin{proposition}\label{lambda12}
	Let $\lambda_1, \lambda_2, \sigma_1, \sigma_2\in\C^*, \eta_1,\eta_2\in\C$ with $\lambda_1\ne \lambda_2$.
	Then $$\<1\otimes 1\>=\Omega(\lambda_1,\eta_1,\sigma_1,0)\otimes\Omega(\lambda_2,\eta_2,0,\sigma_2),$$ 
	where $\<1\otimes 1\>$ denotes the $\GG$-submodule of $\Omega(\lambda_1,\eta_1,\sigma_1,0)\otimes\Omega(\lambda_2,\eta_2,0,\sigma_2)$ generated by $1\otimes 1$.
\end{proposition}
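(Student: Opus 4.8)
The plan is to show that the cyclic submodule $\<1\otimes 1\>$ contains every basis monomial $X^iY^j\otimes S^kT^l$, by producing these monomials one at a time using the actions of $I_m, J_m, H_m, L_m$ on elements already known to lie in $\<1\otimes 1\>$, and then inducting on the total degree (or on the order $\succ$ on $\N^4$ introduced at the end of Section~2). The key leverage comes from the hypothesis $\lambda_1\ne\lambda_2$: when an operator $Z_m$ acts on a tensor $f\otimes g$, the two tensor factors are scaled by $\lambda_1^m$ and $\lambda_2^m$ respectively, so a suitable linear combination over several values of $m$ (a Vandermonde argument of exactly the type used in Lemma~\ref{nonzeroV}) lets us separate terms according to which factor they came from, and this separation fails precisely when $\lambda_1=\lambda_2$.

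First I would record the basic actions on $1\otimes 1$: since $I_m(1)=\lambda_1^m\sigma_1$ in the first factor (note $\sigma_1\in\C^*$ is a constant here, as $\sigma_1\in\C^*$ by hypothesis) and $I_m$ kills the second factor, while $J_m$ kills the first factor and acts by $\lambda_2^m\sigma_2$-translation on the second, and $H_m(1)=\lambda_1^m X$ in the first factor and $\lambda_2^m S$ in the second. So, for instance,
\begin{align*}
I_m(1\otimes 1)&=\lambda_1^m\sigma_1\,(1\otimes 1),\\
J_m(1\otimes 1)&=\lambda_2^m\sigma_2\,(1\otimes T^0\text{-translate})=\lambda_2^m\sigma_2\,(1\otimes 1)\ \text{(degree }0\text{ in }T),
\end{align*}
which by themselves only reproduce $1\otimes1$. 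The first genuinely new vectors come from $H_m$ and $L_m$: acting with $H_m$ gives $\lambda_1^m X\otimes 1+\lambda_2^m\,1\otimes S$, and running $m$ over two values and using $\lambda_1\ne\lambda_2$ (Vandermonde) we extract both $X\otimes 1$ and $1\otimes S$ separately. Similarly $L_m(1\otimes1)=\lambda_1^m(Y+m\eta_1)\,1\otimes1+\lambda_2^m(T+m\eta_2)\,1\otimes1$ after reading off the degree-one-in-$Y$ and degree-one-in-$T$ parts of the translations; combining with the $\lambda_1\ne\lambda_2$ separation and with shifts of $m$ (to kill the $m\eta_i$ pieces and the $Y\mapsto Y-m$, $T\mapsto T-m$ shifts, again Vandermonde) yields $Y\otimes1$ and $1\otimes T$ in $\<1\otimes1\>$.

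Next I would set up the induction. Having $X\otimes1, Y\otimes1, 1\otimes S, 1\otimes T$, I want to climb: given that all monomials of $\succ$-order below some $\bar\alpha=(\alpha_1,\alpha_2,\alpha_3,\alpha_4)$ are in $\<1\otimes1\>$, I produce $X^{\alpha_1}Y^{\alpha_2}\otimes S^{\alpha_3}T^{\alpha_4}$. The mechanism: apply $H_m$, $I_m$, $J_m$, $L_m$ to the appropriate lower monomial; each such application multiplies the degree in $X$ (resp. $S$) by at most one, shifts $Y\mapsto Y-m$ or $T\mapsto T-m$, and inserts factors $\lambda_1^m$, $\lambda_2^m$, $(Y-mX+m\eta_1)$, etc. Expanding in powers of $m$ and applying a Vandermonde elimination over enough values of $m$ isolates the top-degree term, which is the desired monomial plus strictly-lower terms (lower in $\succ$); by the induction hypothesis those lower terms are already in $\<1\otimes1\>$, so the monomial itself is too. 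Concretely, $H_m$ bumps $X$-degree and $S$-degree, $I_m$ bumps $X$-degree (via $\sigma_1$, a nonzero constant) while translating $T$... one needs to check a small finite list of "moves" suffices to reach every $\bar\alpha$ from smaller ones, which is straightforward bookkeeping once the separation-by-$\lambda$ step is in hand.

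The main obstacle is organizing the Vandermonde eliminations cleanly: each operator application introduces \emph{both} a polynomial-in-$m$ coefficient (from the $Y-mX+m\eta$ type factors) \emph{and} the two competing scalars $\lambda_1^m,\lambda_2^m$, so after expansion one is looking at a linear combination of terms of the shape $m^a\lambda_1^m(\cdots)+m^b\lambda_2^m(\cdots)$, and one must argue that choosing sufficiently many distinct integer values of $m$ makes the relevant generalized Vandermonde matrix (with nodes $\lambda_1,\lambda_2$ and multiplicities) invertible — this uses $\lambda_1\ne\lambda_2$ in an essential way and is the step where the hypothesis is consumed. Once that linear-algebra fact is isolated as a sublemma, the rest is the degree induction against the order $\succ$, exactly parallel in spirit to Lemma~\ref{nonzeroV} but run "upward" instead of "downward". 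I would also remark at the end that combined with Lemma~\ref{nonzeroV} this already shows $\Omega(\lambda_1,\eta_1,\sigma_1,0)\otimes\Omega(\lambda_2,\eta_2,0,\sigma_2)$ is irreducible whenever $\lambda_1\ne\lambda_2$, so the interesting case for Theorem~\ref{irr-12} is $\lambda_1=\lambda_2$.
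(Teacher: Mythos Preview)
Your proposal is correct and follows essentially the same strategy as the paper: induct on degree and use a Vandermonde/generalized-Vandermonde separation exploiting $\lambda_1\ne\lambda_2$ to isolate the desired monomial from the action of a generator on a lower-degree element. The paper's execution is slightly leaner---it inducts on the plain total degree $i+j+k+l$ and uses only $H_m$ and $L_m$ (indeed it later remarks that $I_m,J_m$ are never needed here), with the four-term $L_m$ case handled by the invertibility of a single explicit $4\times4$ confluent-Vandermonde matrix---so you can drop the $I_m,J_m$ moves and the order $\succ$ from your write-up without loss.
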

\begin{proof}
We only need to show that $$X^iY^j\otimes S^kT^l\in\<1\otimes 1\>,\quad {\rm \ for\ all\ } i,j,k,l\in\N.$$
First, it is clear that the above result holds when $i+j+k+l\leq 0$.
Now suppose that the conclusion holds when $i+j+k+l\leq n$, where $n\in\N$.
Let $i+j+k+l=n+1$. Then $$i\geq1, {\text\ or \ }j\geq 1,{\text\ or \ }k\geq 1,{\text\ or \ }l\geq 1.$$

Case 1: $i\geq 1$.	

Then $X^{i-1}Y^j\otimes S^kT^l\in\<1\otimes 1\>$.
For any $m\in\Z$, we have
\begin{align*}
H_m(X^{i-1}Y^j\otimes S^kT^l)=\lambda_1^mX^i(Y-m)^j\otimes S^kT^l+\lambda_2^mX^{i-1}Y^j\otimes S^{k+1}(T-m)^l\in\<1\otimes 1\>.
\end{align*}
By induction hypothesis we see that
$$\lambda_1^mX^iY^j\otimes S^kT^l+\lambda_2^mX^{i-1}Y^j\otimes S^{k+1}T^l\in\<1\otimes 1\>,\ \ {\rm \ for\ all\ \ } m\in\Z.$$
So $X^iY^j\otimes S^kT^l\in\<1\otimes 1\>$ since $\lambda_1\ne\lambda_2$.

Case 2: $j\geq 1$.	

Then $X^iY^{j-1}\otimes S^kT^l\in\<1\otimes 1\>$.
For any $m\in\Z$, we get 
\begin{align*}
&L_m(X^iY^{j-1}\otimes S^kT^l) \\
=&\lambda_1^mX^i(Y-m)^{j-1}(Y-mX+m\eta_1)\otimes S^kT^l+\lambda_2^mX^iY^{j-1}\otimes S^k(T-m)^l(T+mS+m\eta_2)\in\<1\otimes 1\>.
\end{align*}
By induction hypothesis we deduce 
$$\lambda_1^mX^iY^j\otimes S^kT^l+\lambda_2^mX^iY^{j-1}\otimes S^kT^{l+1}-m\lambda_1^mX^{i+1}Y^{j-1}\otimes S^kT^l+m\lambda_2^mX^iY^{j-1}\otimes S^{k+1}T^l\in\<1\otimes 1\>,$$
for any $m\in\Z$. Denote
\[
A=\begin{pmatrix}
1&1&0&0\\            
\lambda_1&\lambda_2&-\lambda_1&\lambda_2\\
\lambda_1^2&\lambda_2^2&-2\lambda_1^2&2\lambda_2^2\\
\lambda_1^3&\lambda_2^3&-3\lambda_1^3&3\lambda_2^3
\end{pmatrix}.
\]	
It is easy to see that the matrix $A$ is invertible. 
So $X^iY^j\otimes S^kT^l\in\<1\otimes 1\>$.
	
Case 3: $k\geq 1$ or $l\geq 1$.	

Then we also can get $X^iY^j\otimes S^kT^l\in\<1\otimes 1\>$ by similar discussions to Case 1 or Case 2.

In conclusion, we complete the proof of the proposition.	
\end{proof}

\begin{proposition}\label{lambda1=2}
	Suppose that $\lambda, \sigma_1, \sigma_2\in\C^*, \eta_1,\eta_2\in\C$.
	Let $V$ be a proper subspace of $\Omega(\lambda,\eta_1,\sigma_1,0)\otimes\Omega(\lambda,\eta_2,0,\sigma_2)$, spanned by $$\{\sum_{t=0}^j{\mathrm C}_j^tX^iY^{j-t}\otimes S^kT^t~|~i,j,k\in\N\},$$
where $\mathrm{C}_j^t=\frac{j!}{t!(j-t)!}$.
Then $V$ is a minimal proper submodule of $\Omega(\lambda,\eta_1,\sigma_1,0)\otimes\Omega(\lambda,\eta_2,0,\sigma_2)$.
\end{proposition}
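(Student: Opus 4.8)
The plan is to establish two things: first, that the subspace $V$ is a $\GG$-submodule; second, that it is \emph{minimal} among proper submodules, i.e.\ any nonzero submodule is either equal to $V$ or equal to the whole module. For the submodule property, I would work with the spanning vectors $v_{ijk}:=\sum_{t=0}^j\binom{j}{t}X^iY^{j-t}\otimes S^kT^t$ directly. The key observation is that $v_{ijk}$ is, up to the factor $X^iS^k$, the ``diagonal'' element obtained from $(Y\otimes 1+1\otimes T)^{\,j}$; equivalently, $\sum_{t}\binom{j}{t}Y^{j-t}\otimes T^t$ behaves like $(Y+T)^j$ under the coproduct-type structure coming from $L_m$ and $H_m$. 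Concretely, I would compute $L_m v_{ijk}$, $H_m v_{ijk}$, $I_m v_{ijk}$, $J_m v_{ijk}$ and check each lands in $\mathrm{span}\{v_{i'j'k'}\}$. For instance $H_m(X^iY^{j-t}\otimes S^kT^t)=\lambda^m X^{i+1}(Y-m)^{j-t}\otimes S^kT^t+\lambda^m X^iY^{j-t}\otimes S^{k+1}(T-m)^t$; summing against $\binom{j}{t}$ and using the binomial identity $\sum_t\binom{j}{t}((Y-m)^{j-t}T^t+Y^{j-t}(T-m)^t)=\big((Y+T-m)^j\;\text{paired appropriately}\big)$ collapses the two groups of terms so that the shift by $-m$ is synchronized between the two tensor factors, producing a $\C[m]$-combination of the $v$'s with the ``diagonal'' shape preserved. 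The same bookkeeping works for $L_m$ (here the extra $-mX$ and $+mS$ terms also combine, because on $V$ the ``$X$'' coming from $L^{(1)}$-type terms on the first factor and the ``$S$'' on the second factor are tied together), and for $I_m$, $J_m$ (which shift $X\to X-1$ resp.\ $S\to S+1$ while again synchronizing the $Y,T$ shift). So $V$ is a submodule; it is proper because, e.g., $1\otimes T=v_{0,1,0}-(Y\otimes 1$ part$)$—more precisely $X^0Y^1\otimes S^0T^0$ alone is not in $V$, as one sees by looking at the $T^0$-component.

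For minimality, I would take an arbitrary nonzero submodule $W$ and show $1\otimes 1\in W$ forces $W=\langle 1\otimes1\rangle$, while if $1\otimes 1\notin W$ then $W\subseteq V$ and in fact $W=V$. The first alternative is handled by Lemma~\ref{nonzeroV}: every nonzero submodule contains $1\otimes1$ — wait, that would make $V$ fail to be proper. So the correct dichotomy must be: Lemma~\ref{nonzeroV} is proved for the $\lambda_1\ne\lambda_2$ case implicitly, or rather, one must redo the ``lowering'' argument of Lemma~\ref{nonzeroV} inside the $\lambda_1=\lambda_2=\lambda$ setting and see that it can only push an element down to something in $V$, namely to a nonzero multiple of $v_{0,0,0}+(\text{terms})$ lying in $V$, equivalently to $1\otimes1\bmod V$ only when the element was outside $V$. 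Thus the genuine claim to prove is: (a) if $W\not\subseteq V$, then $1\otimes1\in W$ and hence $W=\Omega\otimes\Omega$ (using Proposition~\ref{lambda12}'s analogue, or a direct generation argument with $\lambda_1=\lambda_2$, which still works for generating everything \emph{once} $1\otimes1$ is available because the matrices appearing are Vandermonde-type in $m$ and remain invertible); and (b) if $\{0\}\ne W\subseteq V$, then $W=V$.

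For (b), the efficient route is to show that any nonzero $w\in V$ generates all of $V$. Using the $J_m$-action and Vandermonde elimination in $m$ exactly as in Lemma~\ref{nonzeroV}, reduce to $w$ with no $T$-dependence beyond what the diagonal shape allows; peeling off $S$ via $\lambda^{-m}\sigma_2^{-1}J_mw-w$ lowers the $S$-degree, and $I_m$ lowers the $X$-degree, all while staying inside $V$ because $V$ is a submodule. This brings us to the bottom vector $v_{0,0,0}=1\otimes1\bmod$ nothing — but $1\otimes1\notin V$, so instead the process terminates at $v_{0,0,0}$'s role being played by $v_{0,0,0}$ itself, i.e.\ we reach a nonzero scalar multiple of the unique (up to scalar) ``degree-zero'' vector of $V$, which is $v_{0,0,0}=1\otimes1$? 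This is the contradiction that shows $V$ contains no element that can be lowered out of it, i.e.\ the lowering stays in $V$ and lands on $v_{0,1,0}+\dots$ type generators, and from the lowest surviving $v$ one climbs back up: apply $H_m$, $L_m$, $I_m$, $J_m$ to recover every $v_{ijk}$ from $v_{000}$-substitute, using the invertibility of the relevant coefficient matrices (the same $4\times4$ matrix $A$ from Proposition~\ref{lambda12}, and Vandermonde matrices, all of which are invertible). Hence $W=V$.

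\emph{Main obstacle.} The delicate point is the very first step: verifying that $V$ is closed under $L_m$ and $H_m$, because naively each generator $X^iY^{j-t}\otimes S^kT^t$ maps to a sum of terms that individually break the ``$\binom{j}{t}$-diagonal'' pattern; only after summing over $t$ do the off-diagonal pieces cancel via binomial identities, and one must set up the right identity (essentially: $\sum_t \binom{j}{t}f(Y-m)^{j-t}g(T)^t$ and $\sum_t\binom{j}{t}f(Y)^{j-t}g(T-m)^t$ reorganize into $\sum_{j'}c_{j'}(m)\sum_{t'}\binom{j'}{t'}Y^{j'-t'}T^{t'}$). Getting the combinatorics of this cancellation right — and checking the same for the $-mX$/$+mS$ terms in $L_m$ — is where the real work lies; everything after (the Vandermonde/matrix-$A$ eliminations) is the routine pattern already used in Lemmas~\ref{nonzeroV} and Proposition~\ref{lambda12}.
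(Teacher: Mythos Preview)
Your plan for the submodule property is essentially correct and matches the paper's approach: one checks $I_m v_{ijk}$, $J_m v_{ijk}$, $H_m v_{ijk}$, $L_m v_{ijk}$ directly and uses the binomial identities (e.g.\ $\mathrm{C}_j^t\mathrm{C}_{j-t}^l=\mathrm{C}_j^l\mathrm{C}_{j-l}^{t}$ after reindexing) to see that the shifts in $Y$ and $T$ reassemble into a $\C[m]$-combination of the diagonal vectors. That is indeed where the honest combinatorics lies, and you have identified it correctly.

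The minimality argument, however, has a genuine gap stemming from a single factual error: you assert that $1\otimes 1\notin V$, but in fact $v_{0,0,0}=\sum_{t=0}^0\binom{0}{t}X^0Y^{-t}\otimes S^0T^t=1\otimes 1$, so $1\otimes 1\in V$. This does \emph{not} contradict properness of $V$---properness comes from, e.g., $Y\otimes 1\notin V$ (only the sum $Y\otimes 1+1\otimes T=v_{0,1,0}$ lies in $V$). Once you realise $1\otimes 1\in V$, the whole (a)/(b) dichotomy evaporates. Lemma~\ref{nonzeroV} is stated and proved \emph{without} the hypothesis $\lambda_1\ne\lambda_2$, so it applies verbatim here: every nonzero submodule $W$ contains $1\otimes 1$. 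It then suffices to show $\langle 1\otimes 1\rangle=V$, i.e.\ that $V$ is generated by $1\otimes 1$; this is what the paper does via two short inductive claims (first $X^i\otimes S^k\in\langle 1\otimes 1\rangle$ using $H_0$ and $L_m$, then $v_{i,j,k}\in\langle 1\otimes 1\rangle$ by applying $L_0$). No ``climbing out of $V$'' or case split is needed.

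A second, related error: you claim the generation argument of Proposition~\ref{lambda12} ``still works for generating everything once $1\otimes1$ is available because the matrices appearing are Vandermonde-type in $m$ and remain invertible''. This is false when $\lambda_1=\lambda_2$: the $4\times4$ matrix $A$ there has determinant proportional to a power of $(\lambda_1-\lambda_2)$ and collapses. That collapse is exactly why $\langle 1\otimes 1\rangle$ equals the proper subspace $V$ rather than the whole tensor product in the equal-$\lambda$ case.
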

\begin{proof}
It is sufficient to show that $V$ is a $\GG$-submodule of $\Omega(\lambda,\eta_1,\sigma_1,0)\otimes\Omega(\lambda,\eta_2,0,\sigma_2)$
and $V$ is minimal.
\begin{claim}\label{submodule}
$V$ is a $\GG$-submodule of $\Omega(\lambda,\eta_1,\sigma_1,0)\otimes\Omega(\lambda,\eta_2,0,\sigma_2)$.
\end{claim}
For any $i, j, k\in\N$, denote $v=\sum_{t=0}^j{\mathrm C}_j^tX^iY^{j-t}\otimes S^kT^t$. 
We only need to prove that 
$$I_mv,\  J_mv,\  H_mv,\  L_mv\in V, \ \  {\rm \ for\ any\ } m\in\Z.$$
Note that for any $\al,\al_1,\be,\be_1,\gamma,\gamma_1\in\N$, 
\begin{equation}
\left\{
\begin{aligned}\label{albega}
&{\mathrm C}_\al^\be{\mathrm C}_{\al-\be}^\gamma={\mathrm C}_\al^\gamma{\mathrm C}_{\al-\gamma}^\be,\,\,\ \text{if}\ \gamma\leq \al-\be,\ \  \be\leq \al,\\
&{\mathrm C}_\al^\be{\mathrm C}_\be^\gamma={\mathrm C}_\al^\gamma{\mathrm C}_{\al-\gamma}^{\be-\gamma}, \quad\ \text{if}\ \gamma\leq\be\leq\al.
\end{aligned}
\right.
\end{equation}
By direct computations and Eq.~\eqref{albega} we obtain that for any $m\in\Z$,
\begin{align}
\lambda^{-m}\sigma_1^{-1}I_mv&=\sum_{t=0}^j{\mathrm C}_j^t(X-1)^i(Y-m)^{j-t}\otimes S^kT^t  \notag \\
&=\sum_{t=0}^j(-m)^t{\mathrm C}_j^t(\sum_{l=0}^{j-t}{\mathrm C}_{j-t}^l(X-1)^iY^{j-t-l}\otimes S^kT^l),\label{Im}\\
\lambda^{-m}\sigma_2^{-1}J_mv&=\sum_{t=0}^j{\mathrm C}_j^tX^iY^{j-t}\otimes (S+1)^k(T-m)^t  \notag \\
&=\sum_{t=0}^j(-m)^t{\mathrm C}_j^t(\sum_{l=0}^{j-t}{\mathrm C}_{j-t}^lX^iY^{j-t-l}\otimes (S+1)^kT^l),   \label{Jm} \\
\lambda^{-m}H_mv&=\sum_{t=0}^j{\mathrm C}_j^tX^{i+1}(Y-m)^{j-t}\otimes S^kT^t+\sum_{t=0}^j{\mathrm C}_j^tX^iY^{j-t}\otimes S^{k+1}(T-m)^t.\label{Hm}
\end{align}
It is easy to see that $$\sum_{l=0}^{j-t}{\mathrm C}_{j-t}^l(X-1)^iY^{j-t-l}\otimes S^kT^l\in V, \quad
\sum_{l=0}^{j-t}{\mathrm C}_{j-t}^lX^iY^{j-t-l}\otimes (S+1)^kT^l\in V.$$
Then by observing Eqs.~\eqref{Im}, \eqref{Jm} and \eqref{Hm} we know that 
$$\lambda^{-m}\sigma_1^{-1}I_mv\in V,\ \ \lambda^{-m}\sigma_2^{-1}J_mv\in V,\ \ \lambda^{-m}H_mv\in V,\ \ 
{\rm \ for\ all\ } m\in\Z.$$
Now we compute 
\begin{align}
&\lambda^{-m}L_mv   \notag  \\
=&\sum_{t=0}^j{\mathrm C}_j^tX^i(Y-m)^{j-t}(Y-mX+m\eta_1)\otimes S^kT^t
+\sum_{t=0}^j{\mathrm C}_j^tX^iY^{j-t}\otimes S^k(T-m)^t(T+mS+m\eta_2)   \notag   \\
=&\sum_{t=0}^j{\mathrm C}_j^tX^i(Y-m)^{j-t}Y\otimes S^kT^t
-m\sum_{t=0}^j{\mathrm C}_j^tX^{i+1}(Y-m)^{j-t}\otimes S^kT^t  \notag \\
&+m\eta_1\sum_{t=0}^j{\mathrm C}_j^tX^i(Y-m)^{j-t}\otimes S^kT^t
+\sum_{t=0}^j{\mathrm C}_j^tX^iY^{j-t}\otimes S^k(T-m)^tT    \notag  \\
&+m\sum_{t=0}^j{\mathrm C}_j^tX^iY^{j-t}\otimes S^{k+1}(T-m)^t
+m\eta_2\sum_{t=0}^j{\mathrm C}_j^tX^iY^{j-t}\otimes S^k(T-m)^t, \ \ \ {\rm \ for\ all\ } m\in\Z. \label{Lm}  
\end{align}
By Eqs.~\eqref{Im} and \eqref{Jm} we see that 
\begin{align}
&-m\sum_{t=0}^j{\mathrm C}_j^tX^{i+1}(Y-m)^{j-t}\otimes S^kT^t\in V,\ \  m\eta_1\sum_{t=0}^j{\mathrm C}_j^tX^i(Y-m)^{j-t}\otimes S^kT^t\in V,
\label{Lm23}  \\
&m\sum_{t=0}^j{\mathrm C}_j^tX^iY^{j-t}\otimes S^{k+1}(T-m)^t\in V,\ \  m\eta_2\sum_{t=0}^j{\mathrm C}_j^tX^iY^{j-t}\otimes S^k(T-m)^t\in V,
\label{Lm56}\\
&\sum_{t=0}^j{\mathrm C}_j^tX^i(Y-m)^{j-t}Y\otimes S^kT^t=
\sum_{t=0}^j(-m)^t{\mathrm C}_j^t(\sum_{l=0}^{j-t}{\mathrm C}_{j-t}^lX^iY^{j+1-t-l}\otimes S^kT^l),\\
&\sum_{t=0}^j{\mathrm C}_j^tX^iY^{j-t}\otimes S^k(T-m)^tT=
\sum_{t=0}^j(-m)^t{\mathrm C}_j^t(\sum_{l=0}^{j-t}{\mathrm C}_{j-t}^lX^iY^{j-t-l}\otimes S^kT^{l+1}).
\end{align}
Then we have
\begin{align}
&\sum_{t=0}^j{\mathrm C}_j^tX^i(Y-m)^{j-t}Y\otimes S^kT^t+\sum_{t=0}^j{\mathrm C}_j^tX^iY^{j-t}\otimes S^k(T-m)^tT \notag \\
=&\sum_{t=0}^j(-m)^t{\mathrm C}_j^t(\sum_{l=0}^{j-t}{\mathrm C}_{j-t}^lX^iY^{j+1-t-l}\otimes S^kT^l)
+\sum_{t=0}^j(-m)^t{\mathrm C}_j^t(\sum_{l=1}^{j+1-t}{\mathrm C}_{j-t}^{l-1}X^iY^{j+1-t-l}\otimes S^kT^l)  \notag \\
=&\sum_{t=0}^j(-m)^t{\mathrm C}_j^t(\sum_{l=1}^{j-t}({\mathrm C}_{j-t}^l+{\mathrm C}_{j-t}^{l-1})X^iY^{j+1-t-l}\otimes S^kT^l)
+\sum_{t=0}^j(-m)^t{\mathrm C}_j^t(X^iY^{j+1-t}\otimes S^k+X^i\otimes S^kT^{j+1-t})   \notag   \\
=&\sum_{t=0}^j(-m)^t{\mathrm C}_j^t(\sum_{l=0}^{j+1-t}{\mathrm C}_{j+1-t}^lX^iY^{j+1-t-l}\otimes S^kT^l)\in V.  \label{Lm14}
\end{align}
Hence $L_mv\in V$ by Eqs.~\eqref{Lm}-\eqref{Lm56} and \eqref{Lm14}. Claim \ref{submodule} is proved.

Next, we prove that $V$ is minimal. Note that any nonzero submodule of $\Omega(\lambda_1,\eta_1,\sigma_1,0)\otimes\Omega(\lambda_2,\eta_2,0,\sigma_2)$ contains $1\otimes 1$.
So we only need to show that $V$ can be generated by $1\otimes 1$.
\begin{claim}\label{XS}
	$X^i\otimes S^k\in\<1\otimes 1\>$ for any $i,k\in\N$.
\end{claim}
Obviously, $1\otimes 1\in\<1\otimes 1\>$. 
Now fix $i,k\in\N$, suppose that $X^i\otimes S^k\in\<1\otimes 1\>$.
It is sufficient to prove that $X^{i+1}\otimes S^k, X^i\otimes S^{k+1}\in\<1\otimes 1\>$.
For any $m\in\Z$, we compute
\begin{align*}
\lambda^{-m}L_m(X^i\otimes S^k)&=X^i(Y-mX+m\eta_1)\otimes S^k+X^i\otimes S^k(T+mS+m\eta_2)\\
&=X^iY\otimes S^k+X^i\otimes S^kT-m(X^{i+1}\otimes S^k-X^i\otimes S^{k+1}-\eta_1X^i\otimes S^k-\eta_2 X^i\otimes S^k),
\end{align*}
which implies that 
$X^{i+1}\otimes S^k-X^i\otimes S^{k+1}-\eta_1X^i\otimes S^k-\eta_2 X^i\otimes S^k\in\<1\otimes 1\>$.
So 
\begin{align}\label{XS-1}
X^{i+1}\otimes S^k-X^i\otimes S^{k+1}\in\<1\otimes 1\>,
\end{align}
 since $X^i\otimes S^k\in\<1\otimes 1\>$.
Moreover, 
\begin{align}\label{XS-2}
H_0(X^i\otimes S^k)=X^{i+1}\otimes S^k+X^i\otimes S^{k+1}\in\<1\otimes 1\>.
\end{align}
By Eqs.~\eqref{XS-1} and \eqref{XS-2} we get $X^{i+1}\otimes S^k, X^i\otimes S^{k+1}\in\<1\otimes 1\>$.
 This proves the Claim \ref{XS}.
\begin{claim}\label{XYS}
For any $j\in\N$,	$\{\sum_{t=0}^j{\mathrm C}_j^tX^iY^{j-t}\otimes S^kT^t~|~i,k\in\N\}\subseteq\<1\otimes 1\>$. 
\end{claim}
By Claim \ref{XS} the result holds when $j=0$.
Now suppose that the result holds when $j=n$, where $n\in\N$. Then we obtain
\begin{align*}
L_0(\sum_{t=0}^n{\mathrm C}_n^tX^iY^{n-t}\otimes S^kT^t)
&=\sum_{t=0}^n{\mathrm C}_n^tX^iY^{n+1-t}\otimes S^kT^t+\sum_{t=0}^n{\mathrm C}_n^tX^iY^{n-t}\otimes S^kT^{t+1}\\
&=\sum_{t=0}^n{\mathrm C}_n^tX^iY^{n+1-t}\otimes S^kT^t+\sum_{t=1}^{n+1}{\mathrm C}_n^{t-1}X^iY^{n+1-t}\otimes S^kT^t\\
&=X^iY^{n+1}\otimes S^k+\sum_{t=1}^n{\mathrm C}_{n+1}^tX^iY^{n+1-t}\otimes S^kT^t+X^i\otimes S^kT^{n+1}\\
&=\sum_{t=0}^{n+1}{\mathrm C}_{n+1}^tX^iY^{n+1-t}\otimes S^kT^t, \ \ \ {\rm \ for\ all \ }\ i,k\in\N.
\end{align*}
Thus Claim \ref{XYS} is proved.

From Claim \ref{XYS} we see that $V\subseteq\<1\otimes 1\>$. 
Therefore $V$ can be generated by $1\otimes 1$.
This completes the proof of the proposition.
\end{proof}

By Lemma \ref{nonzeroV} and Propositions \ref{lambda12}, \ref{lambda1=2} we can directly obtain the following theorem.
\begin{theorem}\label{irr-12}
Let $\lambda_1, \lambda_2, \sigma_1, \sigma_2\in\C^*$ and $\eta_1,\eta_2\in\C$.
Then $\Omega(\lambda_1,\eta_1,\sigma_1,0)\otimes\Omega(\lambda_2,\eta_2,0,\sigma_2)$ is an irreducible $\GG$-module if and only if 
$\lambda_1\ne \lambda_2$.
\end{theorem}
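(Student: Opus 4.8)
The plan is to prove Theorem \ref{irr-12} by combining the three preliminary results that have already been established. First I would observe that by Lemma \ref{U(h)-irr}, a necessary step is to recall that both tensor factors are irreducible $\GG$-modules (since $\sigma_1, \sigma_2$ are nonzero constants), but irreducibility of factors is not in general enough; the real content is the interplay between $\lambda_1$ and $\lambda_2$.

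For the \emph{if} direction, assume $\lambda_1 \ne \lambda_2$. Let $V$ be any nonzero $\GG$-submodule of $\Omega(\lambda_1,\eta_1,\sigma_1,0) \otimes \Omega(\lambda_2,\eta_2,0,\sigma_2)$. By Lemma \ref{nonzeroV}, $1 \otimes 1 \in V$. Then by Proposition \ref{lambda12}, the submodule $\langle 1 \otimes 1 \rangle$ generated by $1 \otimes 1$ is the whole module, so $V$ equals the whole module. Hence there is no proper nonzero submodule, i.e.\ the tensor product is irreducible.

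For the \emph{only if} direction, I would argue by contraposition: assume $\lambda_1 = \lambda_2 =: \lambda$. Then Proposition \ref{lambda1=2} exhibits an explicit proper nonzero $\GG$-submodule $V$ — namely the span of the elements $\sum_{t=0}^j \mathrm{C}_j^t X^i Y^{j-t} \otimes S^k T^t$ — so the tensor product is reducible. (One should note that $V$ is proper because, for instance, $Y \otimes 1$ and $1 \otimes T$ both lie in the module but neither lies in $V$, while their sum $Y \otimes 1 + 1 \otimes T$ does; more simply, $V$ omits monomials like $Y \otimes 1$.) Combining the two directions yields the stated equivalence.

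Since all three ingredients (Lemma \ref{nonzeroV}, Proposition \ref{lambda12}, Proposition \ref{lambda1=2}) are already proved in the excerpt, there is essentially no obstacle remaining: the theorem is a formal consequence, and the proof is a short two-line synthesis. The hard work has been front-loaded into those three results — in particular, the combinatorial identities \eqref{albega} needed to verify that the candidate $V$ in Proposition \ref{lambda1=2} is closed under the $L_m$-action, and the Vandermonde-matrix and matrix-$A$ arguments used to strip off leading terms in Lemma \ref{nonzeroV} and Proposition \ref{lambda12}. So the only thing to be careful about when writing the final proof is to cite the three results in the correct logical order and to make the contrapositive structure of the \emph{only if} direction explicit.
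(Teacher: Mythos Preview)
Your proposal is correct and follows exactly the paper's approach: the paper states that Theorem~\ref{irr-12} follows directly from Lemma~\ref{nonzeroV} and Propositions~\ref{lambda12} and~\ref{lambda1=2}, which is precisely the synthesis you describe. Your only extraneous remark is the appeal to Lemma~\ref{U(h)-irr} about irreducibility of the factors, which is not needed (and the paper does not invoke it), but you yourself note that it plays no real role.
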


\subsection{Isomorphism classes}
In this subsection, we determine the necessary and sufficient conditions 
for two of irreducible tensor product modules obtained in subsection \ref{sub3-1} to be isomorphic.

Note that for any $v\in\Omega(\lambda_1,\eta_1,\sigma_1,0)\otimes\Omega(\lambda_2,\eta_2,0,\sigma_2)$, we can write $v$ in the form 
$$\sum_{i=0}^p\sum_{j=0}^q\sum_{k=0}^s\sum_{l=0}^t\alpha_{ijkl}X^iY^j\otimes S^kT^l,$$
where $p,q,s,t\in\N,\al_{ijkl}\in\C$.
Now let $\Supp(v)$ denote the set of all $(i,j,k,l)$ with $\alpha_{ijkl}\ne 0$
and $\deg(v)$ be the maximal element of $\Supp(v)$ with respect to the total order $\succ$ on $\N^4$,
called the {\bf degree} of $v$.

\begin{theorem}\label{iso-12}
	Let $\lambda_1, \lambda_2,\lambda_1', \lambda_2', \sigma_1, \sigma_2, \sigma_1', \sigma_2'\in\C^*$ and $\eta_1,\eta_2,\eta_1',\eta_2'\in\C$ with $\lambda_1\ne\lambda_2, \lambda_1'\ne\lambda_2'$.
	Then the irreducible $\GG$-modules $$\Omega(\lambda_1,\eta_1,\sigma_1,0)\otimes\Omega(\lambda_2,\eta_2,0,\sigma_2){\rm \ \  and\ \ } \Omega(\lambda_1',\eta_1',\sigma_1',0)\otimes\Omega(\lambda_2',\eta_2',0,\sigma_2')$$
	are isomorphic if and only if 
	$$(\lambda_1,\eta_1,\sigma_1)=(\lambda_1',\eta_1',\sigma_1'){\rm\ \  and \ \ } (\lambda_2,\eta_2,\sigma_2)=(\lambda_2',\eta_2',\sigma_2').$$
\end{theorem}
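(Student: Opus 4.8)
The plan is to prove both directions of the isomorphism criterion in Theorem~\ref{iso-12}. The ``if'' direction is straightforward: if $(\lambda_1,\eta_1,\sigma_1)=(\lambda_1',\eta_1',\sigma_1')$ and $(\lambda_2,\eta_2,\sigma_2)=(\lambda_2',\eta_2',\sigma_2')$, then the two tensor product modules are literally built from the same data, so the identity map on $\C[X,Y]\otimes\C[S,T]$ is a $\GG$-module isomorphism. The substance is in the ``only if'' direction, so I will spend the bulk of the argument there.

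For the ``only if'' direction, suppose $\varphi\colon \Omega(\lambda_1,\eta_1,\sigma_1,0)\otimes\Omega(\lambda_2,\eta_2,0,\sigma_2)\to\Omega(\lambda_1',\eta_1',\sigma_1',0)\otimes\Omega(\lambda_2',\eta_2',0,\sigma_2')$ is an isomorphism of $\GG$-modules. First I would pin down $\varphi(1\otimes 1)$. Since $1\otimes 1$ generates the source module (by Lemma~\ref{nonzeroV} together with Theorem~\ref{irr-12}, every nonzero submodule contains it, hence the cyclic submodule it generates is everything), its image $v:=\varphi(1\otimes 1)$ must be a nonzero generator of the target; by Lemma~\ref{nonzeroV} applied in the target, $1\otimes 1$ lies in $\langle v\rangle$, and a degree/support comparison using the total order $\succ$ on $\N^4$ forces $v$ to be a nonzero scalar multiple of $1\otimes 1$ — essentially because the generating operations $I_m,J_m,H_m,L_m$ can only increase or preserve the $\bfw$-degree, so the minimal-degree component of $v$ must already be $1\otimes 1$ up to scalar, and if $v$ had any higher-degree terms one could not recover $1\otimes 1$ from it. After rescaling $\varphi$, I may assume $\varphi(1\otimes 1)=1\otimes 1$.

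Next I would extract the parameters by examining how $\varphi$ intertwines the Cartan and the generators. Applying $\varphi$ to $H_0(1\otimes 1)$, $L_0(1\otimes 1)$, and then to $I_m(1\otimes 1)$, $J_m(1\otimes 1)$, $H_m(1\otimes 1)$, $L_m(1\otimes 1)$ for general $m$, and comparing with the explicit formulas in Lemma~\ref{three-families}, one reads off the scalars. Concretely: $H_0(1\otimes1)=X\otimes1+1\otimes S$ in both modules, and iterating $L_0,H_0$ one shows $\varphi$ fixes enough low-degree vectors to force $\varphi(X^a\otimes S^b)=X^a\otimes S^b$; then $I_m(1\otimes 1)=\lambda_1^m\sigma_1\,1\otimes 1$ on the source side has image $\lambda_1'^m\sigma_1'\,1\otimes 1$, so $\lambda_1=\lambda_1'$ and $\sigma_1=\sigma_1'$ (recall $\sigma_1,\sigma_1'$ are nonzero constants here since only these give modules we tensor); similarly $J_m(1\otimes 1)$ on the target gives $\lambda_2=\lambda_2'$, $\sigma_2=\sigma_2'$; and $L_m(1\otimes1)-\text{(lower degree already matched)}$ isolates the coefficients $m\eta_1$ and $m\eta_2$, yielding $\eta_1=\eta_1'$, $\eta_2=\eta_2'$. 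One must be careful to use the hypothesis $\lambda_1\ne\lambda_2$ (and $\lambda_1'\ne\lambda_2'$) to separate the $\lambda_1$-contributions from the $\lambda_2$-contributions inside a single $L_m$ or $H_m$ action — this is exactly the mechanism that made the module irreducible in the first place, via the invertible Vandermonde-type matrix $A$ in Proposition~\ref{lambda12}.

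The main obstacle I anticipate is the rigidity step: showing $\varphi(1\otimes 1)$ is forced to be a scalar multiple of $1\otimes 1$ rather than having a complicated higher-degree ``tail.'' The clean way is to argue that $\varphi$ is degree-nonincreasing on the minimal generator, or equivalently to run the argument of Lemma~\ref{nonzeroV} carefully in the target module applied to $v=\varphi(1\otimes1)$: that lemma's proof already extracts, from any nonzero vector, a nonzero vector of the form $1\otimes1$ by successively killing top-degree terms with $J_m$, $H_m$, $I_m$; tracking that $\varphi$ must commute with all these operations forces the top-degree part of $v$ to behave like $1\otimes1$, and a short induction on $\bfw(\deg v)$ kills the tail. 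Once past this, the parameter-matching is a sequence of routine comparisons of the explicit polynomial actions, organized by the total order $\succ$ so that each new generator application isolates exactly one new scalar. I would close by noting the ``if'' direction is immediate, completing the proof.
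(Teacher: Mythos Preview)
Your overall plan---apply $I_m$, $J_m$, $H_m$, $L_m$ to $1\otimes 1$ and compare the two sides through $\varphi$---is exactly the paper's strategy, and your endgame (the Vandermonde-type separation using $\lambda_1\ne\lambda_2$ to extract $\eta_1,\eta_2$) matches the paper. The problem is in the step you yourself flag as the obstacle: the ``rigidity'' argument that $\varphi(1\otimes 1)$ is a scalar multiple of $1\otimes 1$.

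Your justification does not work. You write that the generators ``can only increase or preserve the $\bfw$-degree,'' and conclude that since $1\otimes 1\in\langle v\rangle$, the vector $v$ can have no higher-degree tail. But $\langle v\rangle$ is closed under \emph{linear combinations} of such operators, and combinations like $I_mv-\lambda_1^m\sigma_1 v$ strictly lower the top degree---this is precisely how Lemma~\ref{nonzeroV} extracts $1\otimes 1$ from an arbitrary nonzero element. So the fact that $1\otimes 1$ lies in $\langle v\rangle$ tells you nothing about $\deg(v)$, and the induction you sketch has no base to stand on.

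The paper does not attempt to prove $\varphi(1\otimes 1)\in\C(1\otimes 1)$ first and then read off parameters; it does both at once. Since $I_m(1\otimes 1)=\lambda_1^m\sigma_1(1\otimes 1)$ in the source, intertwining gives the \emph{eigenvector equation} $I_m\,\varphi(1\otimes 1)=\lambda_1^m\sigma_1\,\varphi(1\otimes 1)$ in the target. Writing out $I_m$ on the target and comparing the coefficient of the top monomial $X^{p'}Y^{q'}\otimes S^{s'}T^{t'}$ forces $(\lambda_1')^m\sigma_1'=\lambda_1^m\sigma_1$ for all $m$, hence $\lambda_1=\lambda_1'$ and $\sigma_1=\sigma_1'$. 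With these in hand, the \emph{next} coefficients of the same equation force $p'=0$ and then $q'=0$. The analogous computation with $J_m$ gives $\lambda_2=\lambda_2'$, $\sigma_2=\sigma_2'$ and $s'=t'=0$. Only then does one know $\varphi(1\otimes 1)=\alpha(1\otimes 1)$, and the $H_m$, $L_m$ comparisons you describe finish the job. So the fix is simply to reorder: use the eigenvector constraint from $I_m$ and $J_m$ on $\varphi(1\otimes 1)$ directly, rather than trying to bound its degree by an abstract generation argument.
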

\begin{proof}
The ``if part" is trivial. We only need to show the ``only if part".	
Let $$\varphi:\Omega(\lambda_1,\eta_1,\sigma_1,0)\otimes\Omega(\lambda_2,\eta_2,0,\sigma_2)\rightarrow
\Omega(\lambda_1',\eta_1',\sigma_1',0)\otimes\Omega(\lambda_2',\eta_2',0,\sigma_2')$$ be a $\GG$-module isomorphism.
Then we can write $$\varphi(1\otimes 1)=\sum_{i=0}^p\sum_{j=0}^q\sum_{k=0}^s\sum_{l=0}^t\be_{ijkl}X^iY^j\otimes S^kT^l,$$
where $p,q,s,t\in\N,\be_{ijkl}\in\C$ and $\be_{ijkl}$ are not all zero.
Denote $\deg(\varphi(1\otimes 1))=(p',q',s',t')$, where $$0\leq p'\leq p,\  0\leq q'\leq q,\  0\leq s'\leq s {\rm \ \ and\ \ } 0\leq t'\leq t.$$ So $\be_{p'q's't'}\ne 0$.
Note that
\begin{align}
0= &I_m\varphi(1\otimes 1)-\varphi(I_m(1\otimes 1))  \notag\\
=&\sum_{i=0}^p\sum_{j=0}^q\sum_{k=0}^s\sum_{l=0}^t([(\lambda_1')^m\sigma_1'\be_{ijkl}(X-1)^i(Y-m)^j-\lambda_1^m\sigma_1\be_{ijkl}X^iY^j]\otimes S^kT^l)  \notag      \\
=&[(\lambda_1')^m\sigma_1'-\lambda_1^m\sigma_1]\be_{p'q's't'}X^{p'}Y^{q'}\otimes S^{s'}T^{t'} + v', {\rm \ \ \ for \ \ all\ \ } m\in\Z,  \label{iso-Im}
\end{align}
where $v'\in\Omega(\lambda_1',\eta_1',\sigma_1',0)\otimes\Omega(\lambda_2',\eta_2',0,\sigma_2')$ and 
$$ v'=0, {\rm \ or\ } (p',q',s',t')\succ\deg(v').$$
Thus $$(\lambda_1')^m\sigma_1'-\lambda_1^m\sigma_1=0, \ \ {\rm \ for\ any\ } m\in\Z,$$ which imply 
\begin{align}\label{11'11'}
\lambda_1=\lambda_1',\ \ \sigma_1=\sigma_1'.
\end{align}
\begin{claim}\label{p'q'}
	$p'=0, q'=0$.
\end{claim}
Assuming $p'>0$. Then substituting Eq.~\eqref{11'11'} into Eq.~\eqref{iso-Im}, we obtain that for any $m\in\Z$,
\begin{align*}
0= &I_m\varphi(1\otimes 1)-\varphi(I_m(1\otimes 1))  \notag\\
=&-p'\lambda_1^m\sigma_1\be_{p'q's't'}X^{p'-1}Y^{q'}\otimes S^{s'}T^{t'} + v'',     
\end{align*}
where $v''\in\Omega(\lambda_1',\eta_1',\sigma_1',0)\otimes\Omega(\lambda_2',\eta_2',0,\sigma_2')$ and 
$$v''=0, {\rm\ or\ }(p'-1,q',s',t')\succ\deg(v''),$$
which yields that $-p'\lambda_1^m\sigma_1\be_{p'q's't'}=0$. This is a contradiction. So $p'=0$.

Assuming $q'>0$. Then substituting $p'=0$ and Eq.~\eqref{11'11'} into Eq.~\eqref{iso-Im}, we deduce that for any $m\in\Z$,
\begin{align*}
0= &I_m\varphi(1\otimes 1)-\varphi(I_m(1\otimes 1))  \notag\\
=&-\lambda_1^m\sigma_1(mq'\be_{0q's't'}+\be_{1(q'-1)s't'})Y^{q'-1}\otimes S^{s'}T^{t'} + v''',     
\end{align*}
where $v'''\in\Omega(\lambda_1',\eta_1',\sigma_1',0)\otimes\Omega(\lambda_2',\eta_2',0,\sigma_2')$ and 
$$v'''=0, {\rm\ or\ }(0,q'-1,s',t')\succ\deg(v''').$$
Thus $mq'\be_{0q's't'}+\be_{1(q'-1)s't'}=0$ for any $m\in\Z$, 
which is impossible since $\be_{0q's't'}\ne 0$. So $q'=0$.
Claim \ref{p'q'} is proved. 

Similarly, using that $$0=J_m\varphi(1\otimes 1)-\varphi(J_m(1\otimes 1)) , \ {\rm \ for\ any\ } m\in\Z,$$
we can deduce $$\lambda_2=\lambda_2',\ \sigma_2=\sigma_2',\ s'=0\ {\rm \ and\ }\ t'=0.$$

Now we get that 
\begin{align}\label{1-1}
\varphi(1\otimes 1)=\al(1\otimes 1),
\end{align}
 where $\al\in\C^*$. Then for any $m\in\Z$, we compute
\begin{align*}
\lambda_1^m\varphi(X\otimes 1)+\lambda_2^m\varphi(1\otimes S)&=\varphi(H_m(1\otimes 1))\\
&=H_m(\varphi(1\otimes 1))=\al\lambda_1^m(X\otimes 1)+\al\lambda_2^m(1\otimes S),
\end{align*}
which imply that
\begin{align}\label{X-11-S}
\varphi(X\otimes 1)=\al(X\otimes 1) {\rm \ \ and\ \ } \varphi(1\otimes S)=\al(1\otimes S),
\end{align}
since $\lambda_1\ne\lambda_2$.
Finally, using Eqs.~\eqref{1-1} and \eqref{X-11-S} we see that
\begin{align*}
0=&\varphi(L_m(1\otimes 1))-L_m\varphi(1\otimes 1)\\
=&\lambda_1^m(\varphi(Y\otimes 1)-\al Y\otimes 1)+\lambda_2^m(\varphi(1\otimes T)-\al\otimes T)  \\
&+m\lambda_1^m(\eta_1-\eta_1')\al(1\otimes 1)
+m\lambda_2^m(\eta_2-\eta_2')\al(1\otimes 1),\ \ \ {\rm for\ all \ } m\in\Z.
\end{align*}
Taking $m=0,1,2,3$, we obtain that
	\[
\begin{pmatrix}
1&1&0&0\\            
\lambda_1&\lambda_2&\lambda_1&\lambda_2\\
\lambda_1^2&\lambda_2^2&2\lambda_1^2&2\lambda_2^2\\
\lambda_1^3&\lambda_2^3&3\lambda_1^3&3\lambda_2^3
\end{pmatrix}
\begin{pmatrix}
\varphi(Y\otimes 1)-\al Y\otimes 1\\            
\varphi(1\otimes T)-\al\otimes T\\
(\eta_1-\eta_1')\al(1\otimes 1)\\
(\eta_2-\eta_2')\al(1\otimes 1)
\end{pmatrix}=
\begin{pmatrix}
0\\            
0\\
0\\
0
\end{pmatrix}.
\]
It is easy to see that the matrix
\[
\begin{pmatrix}
1&1&0&0\\            
\lambda_1&\lambda_2&\lambda_1&\lambda_2\\
\lambda_1^2&\lambda_2^2&2\lambda_1^2&2\lambda_2^2\\
\lambda_1^3&\lambda_2^3&3\lambda_1^3&3\lambda_2^3
\end{pmatrix}
\]
is invertible since $\lambda_1\ne\lambda_2$. So
$$\varphi(Y\otimes 1)=\al Y\otimes 1,\ \varphi(1\otimes T)=\al\otimes T,\ \eta_1=\eta_1'\ {\rm \ and \ }\ \eta_2=\eta_2'.$$

In conclusion, we complete the proof of the theorem.
\end{proof}

From the above theorem, we have the following corollary, which is one of the main results of \cite{CGZ}.
\begin{corollary}
	 Let $\lambda,\lambda_1,\sigma,\sigma_1\in\C^*$ and $\eta,\eta_1\in\C$.  Then the following statements hold.
	\begin{enumerate}[$(1)$]
		\item $\Omega(\lambda,\eta,\sigma,0)$ and $\Omega(\lambda_1,\eta_1,\sigma_1,0)$ are isomorphic as $\GG$-modules if and only if $\lambda=\lambda_1,\eta=\eta_1,\sigma=\sigma_1$.
		\item $\Omega(\lambda,\eta,0,\sigma)$ and $\Omega(\lambda_1,\eta_1,0,\sigma_1)$ are isomorphic as $\GG$-modules if and only if $\lambda=\lambda_1,\eta=\eta_1,\sigma=\sigma_1$.
	\end{enumerate}
\end{corollary}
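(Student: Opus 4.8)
The plan is to deduce the corollary from Theorem \ref{iso-12} by a tensoring trick, so that no genuinely new computation is required. The ``if'' direction is trivial, since equal parameters define literally the same $\GG$-module, so I will concentrate on the ``only if'' direction. Throughout I use only the standard fact that tensoring a $\GG$-module isomorphism with the identity map on a fixed $\GG$-module again yields a $\GG$-module isomorphism.

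For part (1), suppose $\varphi\colon \Omega(\lambda,\eta,\sigma,0)\to\Omega(\lambda_1,\eta_1,\sigma_1,0)$ is a $\GG$-module isomorphism. Since $\C^*$ is infinite, I would fix some $\mu\in\C^*$ with $\mu\neq\lambda$ and $\mu\neq\lambda_1$, and introduce the auxiliary module $\Omega(\mu,0,0,1)$ (here the fourth slot is the nonzero constant $1$). Tensoring $\varphi$ with the identity on $\Omega(\mu,0,0,1)$ gives a $\GG$-module isomorphism
\[
\Omega(\lambda,\eta,\sigma,0)\otimes\Omega(\mu,0,0,1)\ \cong\ \Omega(\lambda_1,\eta_1,\sigma_1,0)\otimes\Omega(\mu,0,0,1).
\]
By Theorem \ref{irr-12}, both sides are irreducible $\GG$-modules, because $\lambda\neq\mu$ and $\lambda_1\neq\mu$. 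Applying Theorem \ref{iso-12} to this isomorphism then forces $(\lambda,\eta,\sigma)=(\lambda_1,\eta_1,\sigma_1)$, the matching condition on the (identical) second tensor factors being vacuous. This is exactly the assertion of (1). For part (2) the argument is symmetric: given $\varphi\colon \Omega(\lambda,\eta,0,\sigma)\to\Omega(\lambda_1,\eta_1,0,\sigma_1)$, I would pick $\mu\in\C^*\setminus\{\lambda,\lambda_1\}$, tensor the identity on $\Omega(\mu,0,1,0)$ with $\varphi$ to obtain
\[
\Omega(\mu,0,1,0)\otimes\Omega(\lambda,\eta,0,\sigma)\ \cong\ \Omega(\mu,0,1,0)\otimes\Omega(\lambda_1,\eta_1,0,\sigma_1),
\]
note that both sides are irreducible by Theorem \ref{irr-12} since $\mu\neq\lambda,\lambda_1$, and conclude from Theorem \ref{iso-12} that $(\lambda,\eta,\sigma)=(\lambda_1,\eta_1,\sigma_1)$.

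There is essentially no obstacle beyond bookkeeping. The only point that needs care is matching the two possible ``shapes'' of tensor product appearing in Theorem \ref{iso-12}: the free module under consideration must be placed in the \emph{first} slot (type $\Omega(\cdot,\cdot,\cdot,0)$) for part (1) and in the \emph{second} slot (type $\Omega(\cdot,\cdot,0,\cdot)$) for part (2), which is what dictates the choice of the auxiliary fixed factor ($\Omega(\mu,0,0,1)$ versus $\Omega(\mu,0,1,0)$) in each case; and one must observe that $\mu$ can indeed be chosen distinct from the two relevant $\lambda$-values, which is immediate as $\C^*$ is infinite.
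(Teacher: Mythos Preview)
Your proposal is correct. The paper gives no proof beyond the remark that the corollary follows from Theorem \ref{iso-12}, and your tensoring argument is a clean and valid way to make that deduction precise: pass to tensor products with a suitably chosen auxiliary factor so that the hypotheses of Theorem \ref{iso-12} are met, and read off the equality of parameters.
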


\section{$\Omega(\lambda_1,\eta_1,\sigma_1,0)\otimes\Omega(\lambda_2,\eta_2,\sigma_2,0)$ and $\Omega(\lambda_1,\eta_1,0,\sigma_1)\otimes\Omega(\lambda_2,\eta_2,0,\sigma_2)$}\label{Sec4}
In this section, for convenience, we denote
$$\Omega(\lambda_1,\eta_1,\sigma_1,0)=\C[X,Y] {\rm \ \ \ and\ \ \ } \Omega(\lambda_2,\eta_2,\sigma_2,0)=\C[X_1,Y_1],$$
which are all $\GG$-modules, where $\lambda_1,\lambda_2,\sigma_1, \sigma_2\in\C^*$ and $\eta_1,\eta_2\in\C$.
We investigate the tensor product module $\Omega(\lambda_1,\eta_1,\sigma_1,0)\otimes\Omega(\lambda_2,\eta_2,\sigma_2,0)$.
The isomorphism classes and conditions for these tensor product modules to be irreducible are determined.
Furthermore, the results on tensor product module $\Omega(\lambda_1,\eta_1,0,\sigma_1)\otimes\Omega(\lambda_2,\eta_2,0,\sigma_2)$ follow the same way.

\subsection{Irreducibility}\label{sub4-1}
In this subsection, we study the irreducibility of tensor product modules $\Omega(\lambda_1,\eta_1,\sigma_1,0)\otimes\Omega(\lambda_2,\eta_2,\sigma_2,0)$
and $\Omega(\lambda_1,\eta_1,0,\sigma_1)\otimes\Omega(\lambda_2,\eta_2,0,\sigma_2)$.

For any nonzero element $v\in\Omega(\lambda_1,\eta_1,\sigma_1,0)\otimes\Omega(\lambda_2,\eta_2,\sigma_2,0)$, we can write $v$ in the form 
\begin{align}\label{beta}
\sum_{i=0}^p\sum_{j=0}^q\sum_{k=0}^s\sum_{l=0}^t\be_{ijkl}X^iY^j\otimes X_1^kY_1^l,
\end{align}
where $p,q,s,t\in\N,\be_{ijkl}\in\C$.
Let $\Supp(v)$ denote the set of all $(i,j,k,l)$ with $\be_{ijkl}\ne 0$
and $\deg(v)$ be the maximal element of $\Supp(v)$ with respect to the total order $\succ$ on $\N^4$,
called the {\bf degree} of $v$.

\begin{lemma}\label{lower-degree}
Let $\lambda_1, \lambda_2, \sigma_1, \sigma_2\in\C^*$ and $\eta_1,\eta_2\in\C$ with $\lambda_1\ne\lambda_2$.
Suppose that $v\in\Omega(\lambda_1,\eta_1,\sigma_1,0)\otimes\Omega(\lambda_2,\eta_2,\sigma_2,0)$ is nonzero.
	Denote $\deg(v)=(p',q',s',t')$, where $p',q',s', t'\in\N$.
	The following statements hold.
		\begin{enumerate}[$(1)$]
		\item If $p'>0$, then $$\deg(I_0v-\sigma_1v-\sigma_2v)=(p'-1,q',s',t').$$
		\item If $p'=0, q'>0$, then there exists $m\in\Z$ such that $$\deg(I_mv-\lambda_1^m\sigma_1v-\lambda_2^m\sigma_2v)=(0,q'-1,s',t').$$
		\item If $p'=q'=0, s'>0$, then there exists $m\in\Z$ such that $$\deg(I_mv-\lambda_1^m\sigma_1v-\lambda_2^m\sigma_2v)=(0,0,s'-1,t').$$
		\item If $p'=q'=s'=0, t'>0$, then there exists $m\in\Z$ such that $$\deg(I_mv-\lambda_1^m\sigma_1v-\lambda_2^m\sigma_2v)=(0,0,0,t'-1).$$
	\end{enumerate}
\end{lemma}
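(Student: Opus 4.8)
The plan is to analyze the action of the operators $I_m$ on a general element $v$ written in the form~\eqref{beta}, and to track the leading term with respect to the total order $\succ$. Recall from Lemma~\ref{three-families}(1) that on $\Omega(\lambda_1,\eta_1,\sigma_1,0)\otimes\Omega(\lambda_2,\eta_2,\sigma_2,0)=\C[X,Y]\otimes\C[X_1,Y_1]$ we have
\[
I_m\bigl(X^iY^j\otimes X_1^kY_1^l\bigr)=\lambda_1^m\sigma_1(X-1)^i(Y-m)^j\otimes X_1^kY_1^l+\lambda_2^m\sigma_2 X^iY^j\otimes (X_1-1)^k(Y_1-m)^l.
\]
Subtracting $\lambda_1^m\sigma_1 v+\lambda_2^m\sigma_2 v$ therefore replaces each basis monomial $X^iY^j\otimes X_1^kY_1^l$ by
\[
\lambda_1^m\sigma_1\bigl[(X-1)^i(Y-m)^j-X^iY^j\bigr]\otimes X_1^kY_1^l+\lambda_2^m\sigma_2\,X^iY^j\otimes\bigl[(X_1-1)^k(Y_1-m)^l-X_1^kY_1^l\bigr],
\]
and in each bracket the monomial $X^iY^j$ (resp. $X_1^kY_1^l$) has been cancelled, so the surviving terms are strictly lower in the lexicographic comparison that $\succ$ refines. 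The key point is to identify precisely which of these lower terms is the new leading one, and to check its coefficient is nonzero.

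For part~(1): I would apply $I_0$, so $\lambda_j^m=1$ and there is no shift in the $Y$- or $Y_1$-variables. Then $(X-1)^i-X^i=-iX^{i-1}+(\text{lower in }X)$, and the leading monomial of $I_0v-\sigma_1v-\sigma_2v$ comes from the $\deg(v)=(p',q',s',t')$-term: it is $-p'\sigma_1\be_{p'q's't'}X^{p'-1}Y^{q'}\otimes X_1^{s'}Y_1^{t'}$. I must check that no other monomial of $v$ can contribute a term of degree $(p'-1,q',s',t')$ or higher after subtraction; this follows because any monomial $X^iY^j\otimes X_1^kY_1^l$ appearing in $v$ with $(i,j,k,l)\ne(p',q',s',t')$ satisfies $(i,j,k,l)\prec(p',q',s',t')$, and the operation $I_0-\sigma_1-\sigma_2$ can only lower the degree of a monomial, so the image of that monomial has degree $\prec(p',q',s',t')$; a small argument (comparing weights $\bfw$) shows in fact it cannot reach $(p'-1,q',s',t')$ from strictly below unless it equals $(p'-1,q',s',t')$ in weight, i.e. $\bfw(i,j,k,l)=p'+q'+s'+t'-1$, and then one checks the $Y,X_1,Y_1$-components force a contradiction. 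Since $p'\sigma_1\be_{p'q's't'}\ne0$, the claim follows.

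For parts~(2)--(4), the variable we are lowering ($Y$, $X_1$, or $Y_1$) carries a shift by $m$ under $I_m$ whenever $\lambda_1^m$ or $\lambda_2^m$ appears, so a single choice of $m$ must be made carefully so that (a) the desired coefficient does not vanish, and (b) no cancellation from the two tensor factors or from lower monomials of $v$ destroys the leading term. Concretely, in part~(2) one computes that the coefficient of $Y^{q'-1}\otimes X_1^{s'}Y_1^{t'}$ in $I_mv-\lambda_1^m\sigma_1v-\lambda_2^m\sigma_2v$ is, up to the nonzero factor $\be_{0q's't'}$, a polynomial in $m$ of the form $-q'm(\lambda_1^m\sigma_1-\lambda_2^m\sigma_2)+(\text{terms from }\be_{1(q'-1)s't'},\dots)$, plus contributions $\lambda_2^m\sigma_2\times(\text{coefficient from }(X_1-1)^{s'}(Y_1-m)^{t'})$ which are polynomial in $m$ times $\lambda_2^m$; since $\lambda_1\ne\lambda_2$ this whole expression, viewed as a finite $\C$-linear combination of functions $m\mapsto m^a\lambda_1^m$ and $m\mapsto m^b\lambda_2^m$, is not identically zero (these functions are linearly independent over $\C$), so some $m\in\Z$ makes it nonzero; for that $m$, the degree is exactly $(0,q'-1,s',t')$ since every other monomial in the image has strictly smaller degree. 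Parts~(3) and~(4) are handled identically, lowering $X_1$ then $Y_1$ and using the same linear-independence-of-$m\mapsto m^a\lambda^m$ argument together with $\sigma_2\ne0$.

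\medskip

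The main obstacle is bookkeeping rather than conceptual: one must rule out \emph{accidental cancellation} of the expected leading term. There are two sources — interference between the two tensor factors (the $\lambda_2^m\sigma_2$-part could in principle contribute to the same monomial we are isolating from the $\lambda_1^m\sigma_1$-part), and interference from lower monomials of $v$ (a monomial of $v$ of smaller degree, after the degree-lowering operation, landing on the target monomial). The hypothesis $\lambda_1\ne\lambda_2$ defeats the first via linear independence of $\{m^a\lambda_1^m\}\cup\{m^b\lambda_2^m\}$ as functions of $m$, which is exactly why it is needed; the second is controlled purely by the structure of the total order $\succ$ (weight first, then reverse-lexicographic in $l,k,j,i$), since the operators $I_m$, $I_m-\lambda_1^m\sigma_1-\lambda_2^m\sigma_2$ are \emph{filtered} — they never raise $\bfw$ and, on the top $\bfw$-layer, only strip the chosen variable. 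Carefully packaging these two facts is the only delicate part; the rest is the direct computation sketched above.
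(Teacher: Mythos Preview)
Your strategy coincides with the paper's: expand $I_mv-\lambda_1^m\sigma_1v-\lambda_2^m\sigma_2v$, then read off the new leading monomial with respect to~$\succ$. The only substantive difference is in how you select $m$ in parts~(2)--(4). The paper pins down \emph{exactly} which lower coefficients $\be_{\bullet}$ can contribute to the target monomial (there are at most two, three, and four of them in cases (2), (3), (4) respectively) and then exhibits an invertible $2\times2$, $3\times3$, or $4\times4$ generalized Vandermonde matrix in $\lambda_1,\lambda_2$ to produce an $m\in\{0,1,2,3\}$ that works. You instead observe that the target coefficient, as a function of $m$, lies in the span of $\{m^a\lambda_1^m,\ m^b\lambda_2^m\}$, note that one of these basis functions carries a nonzero coefficient (coming from $\be_{p'q's't'}$), and invoke the linear independence of this family over~$\C$. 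Both arguments are valid; the paper's gives an explicit bound on $m$ and forces you to enumerate the interfering terms, while yours is quicker but leaves the ``no accidental cancellation from lower monomials'' check implicit.

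One correction to your sketch: in part~(2) the coefficient of $Y^{q'-1}\otimes X_1^{s'}Y_1^{t'}$ has \emph{no} $\lambda_2^m$-contribution. The second-factor piece of $I_m$ acting on the top monomial produces $Y^{q'}\otimes(\text{lower in }X_1,Y_1)$, which lies strictly below $(0,q'-1,s',t')$ in~$\succ$, and no lower monomial of $v$ can feed into $(0,q'-1,s',t')$ via the second factor either. The exact coefficient is $-\lambda_1^m\sigma_1\bigl(mq'\be_{0q's't'}+\be_{1(q'-1)s't'}\bigr)$, so this case does not use $\lambda_1\ne\lambda_2$ at all; the hypothesis is genuinely needed only in (3) and (4). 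This does not damage your argument, but the formula you wrote (with $\lambda_1^m\sigma_1-\lambda_2^m\sigma_2$) is incorrect, and you should tighten the ``one checks'' steps into the short degree comparisons the paper's ``$+\text{ lower-terms}$'' is implicitly relying on.
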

\begin{proof}
	We first write $v$ in the form \eqref{beta}. 
Then it is clear that $p\geq p', q\geq q', s\geq s', t\geq t'$ and $\be_{p'q's't'}\ne 0$ since $\deg(v)=(p',q',s',t')$.
	For any $m\in\Z$, we compute
	\begin{align}
	&I_mv-\lambda_1^m\sigma_1v-\lambda_2^m\sigma_2v \notag \\
	=&\sum_{i=0}^p\sum_{j=0}^q\sum_{k=0}^s\sum_{l=0}^t\lambda_1^m\sigma_1\be_{ijkl}((X-1)^i(Y-m)^j-X^iY^j)\otimes X_1^kY_1^l \notag \\
	&+\sum_{i=0}^p\sum_{j=0}^q\sum_{k=0}^s\sum_{l=0}^t\lambda_2^m\sigma_2\be_{ijkl}X^iY^j\otimes((X_1-1)^k(Y_1-m)^l-X_1^kY_1^l). \label{Im-v-v}
	\end{align}

$(1)$ If $p'>0$, then it is easy to see that
$$I_0v-\sigma_1v-\sigma_2v=-p'\sigma_1\be_{p'q's't'}X^{p'-1}Y^{q'}\otimes X_1^{s'}Y_1^{t'}+{\rm \ lower-terms\ }.$$
So $\deg(I_0v-\sigma_1v-\sigma_2v)=(p'-1,q',s',t')$.

$(2)$ If $p'=0, q'>0$, then there exists $m\in\Z$ such that $$mq'\be_{0q's't'}+\be_{1(q'-1)s't'}\ne 0,$$ since $\be_{0q's't'}\ne 0$. So there exists $m\in\Z$ such that
$$I_mv-\lambda_1^m\sigma_1v-\lambda_2^m\sigma_2v 
=(-mq'\be_{0q's't'}-\be_{1(q'-1)s't'})\lambda_1^m\sigma_1Y^{q'-1}\otimes X_1^{s'}Y_1^{t'}+{\rm \ lower-terms\ },$$
 which yields that there exists $m\in\Z$ such that $$\deg(I_mv-\lambda_1^m\sigma_1v-\lambda_2^m\sigma_2v)=(0,q'-1,s',t').$$

$(3)$ Suppose that $p'=q'=0, s'>0$. Then it is easy to see that the following matrix
\[
\begin{pmatrix}
-1&-1&0\\            
-\lambda_2&-\lambda_1&-\lambda_1\\
-\lambda_2^2&-\lambda_1^2&-2\lambda_1^2
\end{pmatrix}
\]
is invertible since $\lambda_1\ne\lambda_2$. 
Thus there exists $0\leq m\leq 2$ such that
$$-\lambda_2^m\sigma_2s'\be_{00s't'}-\lambda_1^m\sigma_1\be_{10(s'-1)t'}-m\lambda_1^m\sigma_1\be_{01(s'-1)t'}\ne 0,$$
since $\sigma_2s'\be_{00s't'}\ne 0$.
So there exists $0\leq m\leq 2$ such that
\begin{align*}
&I_mv-\lambda_1^m\sigma_1v-\lambda_2^m\sigma_2v   \\
=&(-\lambda_2^m\sigma_2s'\be_{00s't'}-\lambda_1^m\sigma_1\be_{10(s'-1)t'}-m\lambda_1^m\sigma_1\be_{01(s'-1)t'})\otimes X_1^{s'-1}Y_1^{t'}+{\rm \ lower-terms\ },
\end{align*}
which implies that there exists $0\leq m\leq 2$ such that $$\deg(I_mv-\lambda_1^m\sigma_1v-\lambda_2^m\sigma_2v)=(0,0,s'-1,t').$$

$(4)$ Suppose that $p'=q'=s'=0, t'>0$. Then the following matrix
\[
\begin{pmatrix}
0&-1&0&-1\\            
-\lambda_2&-\lambda_2&-\lambda_1&-\lambda_1\\
-2\lambda_2^2&-\lambda_2^2&-2\lambda_1^2&-\lambda_1^2\\
-3\lambda_2^3&-\lambda_2^3&-3\lambda_1^3&-\lambda_1^3
\end{pmatrix}
\]
is invertible since $\lambda_1\ne\lambda_2$. 
Thus there exists $0\leq m\leq 3$ such that
$$-m\lambda_2^m\sigma_2t'\be_{000t'}-\lambda_2^m\sigma_2\be_{001(t'-1)}
-m\lambda_1^m\sigma_1\be_{010(t'-1)}-\lambda_1^m\sigma_1\be_{100(t'-1)}\ne 0,$$
since $\sigma_2t'\be_{000t'}\ne 0$.
So there exists $0\leq m\leq 3$ such that
\begin{align*}
&I_mv-\lambda_1^m\sigma_1v-\lambda_2^m\sigma_2v   \\
=&(-m\lambda_2^m\sigma_2t'\be_{000t'}-\lambda_2^m\sigma_2\be_{001(t'-1)}
-m\lambda_1^m\sigma_1\be_{010(t'-1)}-\lambda_1^m\sigma_1\be_{100(t'-1)})\otimes Y_1^{t'-1}+{\rm \ lower-terms\ },
\end{align*}
which shows that there exists $0\leq m\leq 3$ such that $$\deg(I_mv-\lambda_1^m\sigma_1v-\lambda_2^m\sigma_2v)=(0,0,0,t'-1).$$
This completes the proof of the lemma.
\end{proof}

From the above lemma, the following proposition is clear.
\begin{proposition}\label{nonzeroV-1}
	Let $\lambda_1, \lambda_2, \sigma_1, \sigma_2\in\C^*$ and $\eta_1,\eta_2\in\C$ with $\lambda_1\ne\lambda_2$.
	Suppose that $V$ is a nonzero $\GG$-submodule of $\Omega(\lambda_1,\eta_1,\sigma_1,0)\otimes\Omega(\lambda_2,\eta_2,\sigma_2,0)$.
	Then $1\otimes 1\in V$.
\end{proposition}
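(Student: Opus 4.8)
The plan is to show that any nonzero $\GG$-submodule $V$ of $\Omega(\lambda_1,\eta_1,\sigma_1,0)\otimes\Omega(\lambda_2,\eta_2,\sigma_2,0)$ must contain $1\otimes 1$, by a descent argument on the degree of an element with respect to the total order $\succ$ on $\N^4$. Concretely, I would pick a nonzero $v\in V$ of minimal degree $\deg(v)=(p',q',s',t')$ among all nonzero elements of $V$, and then argue that $\deg(v)=(0,0,0,0)$, i.e.\ $v$ is a nonzero scalar multiple of $1\otimes 1$. Since $V$ is a submodule, the vectors $I_0v-\sigma_1v-\sigma_2v$ and $I_mv-\lambda_1^m\sigma_1v-\lambda_2^m\sigma_2v$ (for $m\in\Z$) all lie in $V$, so I can invoke Lemma \ref{lower-degree} directly.

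First, if $p'>0$, then part $(1)$ of Lemma \ref{lower-degree} produces a nonzero element $I_0v-\sigma_1v-\sigma_2v\in V$ of degree $(p'-1,q',s',t')\prec(p',q',s',t')$, contradicting minimality; hence $p'=0$. Next, with $p'=0$, if $q'>0$, part $(2)$ of the lemma gives some $m\in\Z$ with $I_mv-\lambda_1^m\sigma_1v-\lambda_2^m\sigma_2v$ nonzero of degree $(0,q'-1,s',t')\prec\deg(v)$, again in $V$, a contradiction; so $q'=0$. Repeating this with parts $(3)$ and $(4)$ forces $s'=0$ and then $t'=0$. Therefore $\deg(v)=(0,0,0,0)$, which means $v=\beta_{0000}\,1\otimes 1$ with $\beta_{0000}\ne 0$, and since $V$ is a subspace we conclude $1\otimes 1\in V$.

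There is essentially no obstacle here once Lemma \ref{lower-degree} is in hand: the proposition is an immediate corollary, obtained by iterating the four degree-lowering statements and using the well-foundedness of $\succ$ on $\N^4$ (equivalently, induction on $\bfw(\deg(v))$ with tie-breaking by the lexicographic components). The only point deserving a line of care is to note that each of the auxiliary vectors $I_0v-\sigma_1v-\sigma_2v$ and $I_mv-\lambda_1^m\sigma_1v-\lambda_2^m\sigma_2v$ genuinely lies in $V$ — which holds because $V$ is closed under the $\GG$-action and under scalar combinations — so that the contradiction with the minimality of $\deg(v)$ is legitimate. I would write the argument as a short induction/descent and remark that this is where the hypothesis $\lambda_1\ne\lambda_2$ enters, namely through the invertibility of the Vandermonde-type matrices used in parts $(3)$ and $(4)$ of Lemma \ref{lower-degree}.
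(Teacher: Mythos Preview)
Your proof is correct and is precisely the argument the paper has in mind: the paper's own proof of this proposition is the single sentence ``From the above lemma, the following proposition is clear,'' and your descent on $\deg(v)$ via the four cases of Lemma~\ref{lower-degree} is exactly how one makes that sentence rigorous. The well-ordering of $\succ$ (guaranteed since there are only finitely many tuples of a given weight) and the closure of $V$ under the combinations $I_mv-\lambda_1^m\sigma_1v-\lambda_2^m\sigma_2v$ are the only points to check, and you have addressed both.
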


Recall that we only use the actions of $H_m$ and $L_m$ in the proof of Proposition \ref{lambda12}, where $m\in\Z$.
So by the similar discussions we can get the following proposition.
\begin{proposition}\label{lambda12'}
	Let $\lambda_1, \lambda_2, \sigma_1, \sigma_2\in\C^*$ and $\eta_1,\eta_2\in\C$ with $\lambda_1\ne\lambda_2$.
	 Then $$\<1\otimes 1\>=\Omega(\lambda_1,\eta_1,\sigma_1,0)\otimes\Omega(\lambda_2,\eta_2,\sigma_2,0),$$ 
	where $\<1\otimes 1\>$ denotes the $\GG$-submodule of $\Omega(\lambda_1,\eta_1,\sigma_1,0)\otimes\Omega(\lambda_2,\eta_2,\sigma_2,0)$ generated by $1\otimes 1$.
\end{proposition}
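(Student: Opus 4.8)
The plan is to mimic the proof of Proposition~\ref{lambda12} almost verbatim, checking that only the actions of $H_m$ and $L_m$ were used there, which is precisely the hint the paper gives. Concretely, I would show by induction on $n$ that $X^iY^j\otimes X_1^kY_1^l\in\<1\otimes 1\>$ for all $i,j,k,l\in\N$ with $i+j+k+l\le n$. The base case $n=0$ is trivial since $1\otimes 1\in\<1\otimes 1\>$. For the inductive step with $i+j+k+l=n+1$, at least one of $i,j,k,l$ is positive, and I would split into the corresponding four cases.

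If $i\geq 1$, apply $H_m$ to $X^{i-1}Y^j\otimes X_1^kY_1^l$. Using the actions in Lemma~\ref{three-families}(1) on both tensor factors, $H_m(X^{i-1}Y^j\otimes X_1^kY_1^l)=\lambda_1^m X^i(Y-m)^j\otimes X_1^kY_1^l+\lambda_2^m X^{i-1}Y^j\otimes X_1^{k+1}(Y_1-m)^l$; expanding the shifts $(Y-m)^j$ and $(Y_1-m)^l$ and subtracting the lower-total-degree terms (which lie in $\<1\otimes 1\>$ by the induction hypothesis) leaves $\lambda_1^m X^iY^j\otimes X_1^kY_1^l+\lambda_2^m X^{i-1}Y^j\otimes X_1^{k+1}Y_1^l\in\<1\otimes 1\>$ for all $m\in\Z$; taking $m=0,1$ and using $\lambda_1\ne\lambda_2$ (a $2\times 2$ Vandermonde-type system) extracts $X^iY^j\otimes X_1^kY_1^l$. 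By the obvious symmetry of the two factors (both are of type $\Omega(\lambda,\eta,\sigma,0)$), the case $k\geq 1$ is handled identically with the roles of the two tensor slots swapped. For the case $j\geq 1$ (and symmetrically $l\geq 1$), apply $L_m$ to $X^iY^{j-1}\otimes X_1^kY_1^l$: from the $L_m$-action one gets, modulo lower-total-degree terms absorbed by the induction hypothesis, a $\Z$-family of vectors that is a $\C$-combination of $X^iY^j\otimes X_1^kY_1^l$, $X^iY^{j-1}\otimes X_1^kY_1^{l+1}$, $X^{i+1}Y^{j-1}\otimes X_1^kY_1^l$, and $X^iY^{j-1}\otimes X_1^{k+1}Y_1^l$ with coefficients $\lambda_1^m,\lambda_2^m,-m\lambda_1^m,m\lambda_2^m$; evaluating at $m=0,1,2,3$ gives exactly the invertible matrix $A$ from the proof of Proposition~\ref{lambda12}, and inverting yields $X^iY^j\otimes X_1^kY_1^l\in\<1\otimes 1\>$. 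This closes the induction, hence $\<1\otimes 1\>$ contains a basis of $\C[X,Y]\otimes\C[X_1,Y_1]$ and equals the whole module.

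I do not anticipate a genuine obstacle here: the entire content is that $H_m$ and $L_m$ act on $\Omega(\lambda,\eta,\sigma,0)$ exactly as they do in Proposition~\ref{lambda12} (the $I_m,J_m$ actions differ between the two families, but they were never used), so the computations and the linear-algebra extraction go through unchanged. The only point deserving care is bookkeeping: one must make sure that, after applying $H_m$ or $L_m$ to a monomial of total degree $n$, every term other than the four ``top'' monomials of total degree $n+1$ has total degree $\le n$ and so is covered by the induction hypothesis; this is immediate because the shifts $Y\mapsto Y-m$, $Y_1\mapsto Y_1-m$ and the multiplications by $Y,Y_1,X,X_1$ each raise total degree by at most one. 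Accordingly I would simply write ``by the same argument as in Proposition~\ref{lambda12}, using only the actions of $H_m$ and $L_m$'' and then spell out the two new Vandermonde/$A$-matrix steps in the symmetric case, leaving the routine expansions to the reader.
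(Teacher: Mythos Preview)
Your approach is correct and is essentially the paper's own: the paper merely remarks that only the actions of $H_m$ and $L_m$ were used in Proposition~\ref{lambda12} and invokes ``similar discussions,'' which is exactly what you spell out. One minor slip to fix: since the second factor is now of type $\Omega(\lambda,\eta,\sigma,0)$, the $L_m$-action there contributes $-mX_1$ (not $+mS$), so the fourth coefficient in your $L_m$-step is $-m\lambda_2^m$ rather than $+m\lambda_2^m$; the resulting $4\times4$ matrix differs from $A$ by a sign in the last column but is still invertible when $\lambda_1\ne\lambda_2$, so the argument is unaffected.
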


\begin{proposition}\label{lambda1=2'}
Let $\lambda, \sigma_1, \sigma_2\in\C^*, \eta_1,\eta_2\in\C$.
 Denote $$W=\mathrm{span}\{\sum_{t=0}^j{\mathrm C}_j^tX^iY^{j-t}\otimes X_1^kY_1^t~|~i,j,k\in\N\},$$
which is a nonzero proper subspace of $\Omega(\lambda,\eta_1,\sigma_1,0)\otimes\Omega(\lambda,\eta_2,\sigma_2,0)$.
Then $W$ is a nonzero proper submodule of $\Omega(\lambda,\eta_1,\sigma_1,0)\otimes\Omega(\lambda,\eta_2,\sigma_2,0)$.
\end{proposition}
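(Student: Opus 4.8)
The plan is to show $W$ is a submodule by a direct computation entirely parallel to Claim~\ref{submodule} in the proof of Proposition~\ref{lambda1=2}, and then to check the two easy facts that $W$ is nonzero and proper. First I would fix $i,j,k\in\N$, set $v=\sum_{t=0}^j{\mathrm C}_j^tX^iY^{j-t}\otimes X_1^kY_1^t$, and verify that $I_mv,\,J_mv,\,H_mv,\,L_mv\in W$ for every $m\in\Z$. The key point is that both tensor factors are now of type $\Omega(\cdot,\cdot,\sigma,0)$, so $I_m$ acts on each factor through the ``$X\mapsto X-1$'' shift (times $\lambda^m\sigma_i$) while $J_m$ kills the first factor and, crucially, also kills the second factor (since $J_m=0$ on $\Omega(\lambda,\eta_2,\sigma_2,0)$). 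Hence $\lambda^{-m}\sigma_2^{-1}J_mv$ simply does not appear; instead one has $J_mv=0$, which is trivially in $W$. The genuinely new verification is for $I_m$: using the binomial expansion $(X-1)^i$, $(Y-m)^{j-t}$, $(Y_1-m)^{l}$ exactly as in Eqs.~\eqref{Im}--\eqref{Hm}, and the combinatorial identities \eqref{albega}, one rewrites $\lambda^{-m}I_mv$ (with appropriate $\sigma$-factors) as a $\C$-linear combination (with coefficients polynomial in $m$) of elements of the form $\sum_{l}{\mathrm C}_{j'}^l(\text{shift of }X^iY^{j'-l})\otimes X_1^{k}Y_1^l$, each of which lies in $W$ by definition.

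For $H_m$ and $L_m$ the computation is essentially verbatim the one in Proposition~\ref{lambda1=2}: $H_mv=\lambda^m X^{i+1}(\cdots)\otimes X_1^kY_1^t+\lambda^m X^iY^{j-t}\otimes X_1^{k+1}(\cdots)^t$ after summing over $t$, and this is handled by Eq.~\eqref{Hm}-type reshuffling (the second factor now carries $X_1^{k+1}$ instead of $S^{k+1}$, but the formal structure is identical). For $L_m$, I would split $\lambda^{-m}L_mv$ into the six summands as in Eq.~\eqref{Lm}: the four ``$-mX^{i+1}$'', ``$m\eta_1 X^i$'', ``$mX_1^{k+1}$'' (wait—there is no such term here, since $L_m$ on $\Omega(\lambda,\eta_2,\sigma_2,0)$ has the form $\lambda^m(Y_1-mX_1+m\eta_2)$, so the $X_1$-shift appears with a $-m$ sign, exactly mirroring the first factor) — in any case the terms involving $X^{i+1}$, $X_1^{k+1}$, $\eta_1$, $\eta_2$ are each seen to lie in $W$ via the already-established membership of $I_m$-images, and the two ``diagonal'' terms $\sum_t{\mathrm C}_j^tX^i(Y-m)^{j-t}Y\otimes X_1^kY_1^t$ and $\sum_t{\mathrm C}_j^tX^iY^{j-t}\otimes X_1^k(Y_1-m)^tY_1$ combine by the Pascal identity ${\mathrm C}_{j-t}^l+{\mathrm C}_{j-t}^{l-1}={\mathrm C}_{j+1-t}^l$ into $\sum_t(-m)^t{\mathrm C}_j^t\big(\sum_{l=0}^{j+1-t}{\mathrm C}_{j+1-t}^lX^iY^{j+1-t-l}\otimes X_1^kY_1^l\big)\in W$, precisely as in Eq.~\eqref{Lm14}. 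Extracting coefficients of powers of $m$ (Vandermonde) then gives $L_mv\in W$ for all $m$.

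Finally, $W$ is nonzero since $1\otimes 1$ (the $i=j=k=0$ generator) lies in it, and $W$ is proper since, for instance, $Y\otimes 1-1\otimes Y_1$ is not of the form $\sum_t{\mathrm C}_j^tX^iY^{j-t}\otimes X_1^kY_1^t$: every spanning element with a nonconstant $Y$-part also has a matching nonzero $Y_1$-part with the same total $Y$-degree and the correct binomial coefficients, so $W$ misses this element (one checks this by comparing coefficients in the monomial basis $X^iY^j\otimes X_1^kY_1^l$). I do not expect any real obstacle: the only thing to watch is bookkeeping of the $\sigma_1,\sigma_2,\lambda$ scalar factors (which are harmless nonzero constants and can be divided out) and the sign conventions in the $L_m$-action on the two factors, which are now \emph{the same} type (both ``$-mX$''), making the diagonal-term cancellation cleaner than in Proposition~\ref{lambda1=2}. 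The main routine burden is simply reproducing the Eq.~\eqref{Lm}--\eqref{Lm14} chain with $S,T$ replaced by $X_1,Y_1$ and with the sign of the $X_1$-shift term flipped.
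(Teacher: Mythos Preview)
Your proposal is correct and follows essentially the same route as the paper: fix a spanning element $v$, observe $J_mv=0$ trivially, and for $I_m,H_m,L_m$ reduce to the identities already established in Eqs.~\eqref{Im}--\eqref{Lm14} with $(S,T)$ replaced by $(X_1,Y_1)$ and the appropriate sign change in the $L_m$-action on the second factor. The paper's own proof is terser---it simply invokes Eqs.~\eqref{Im} and \eqref{Jm} for $I_m$ and says ``similarly'' for $H_m,L_m$---while you spell out the $L_m$ bookkeeping and also supply the (easy) verification that $W$ is nonzero and proper, which the paper takes as given in the statement. One small remark: your final ``extracting coefficients of powers of $m$ (Vandermonde)'' step is unnecessary---once each summand of $\lambda^{-m}L_mv$ is shown to lie in $W$ for a fixed $m$, you already have $L_mv\in W$; there is nothing further to extract.
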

\begin{proof}
	 It is sufficient to show that for any $i, j, k\in\N$,  
	$$I_mw, J_mw, H_mw, L_mw\in W, \quad {\rm \ for\ all\ } m\in\Z,$$
where $w=\sum_{t=0}^j{\mathrm C}_j^tX^iY^{j-t}\otimes X_1^kY_1^t$.
	 First,  for any $m\in\Z$, $J_mw\in W$ is obvious since $J_mw=0$. 
	 Moreover, we have
	\begin{align*}
	\lambda^{-m}I_mw 
	&=\sigma_1\sum_{t=0}^j{\mathrm C}_j^t(X-1)^i(Y-m)^{j-t}\otimes X_1^kY_1^t 
	+\sigma_2\sum_{t=0}^j{\mathrm C}_j^tX^iY^{j-t}\otimes (X_1-1)^k(Y_1-m)^t, 
	\end{align*}
	for any $m\in\Z$.
	Then by Eqs.~\eqref{Im} and \eqref{Jm} we deduce that 
	\begin{align*}
	I_mw\in W,\ \  {\rm \ for\ all\ } m\in\Z.
	\end{align*}
	 Similarly, we can get that 
	\begin{align*}
 H_mw,\ L_mw\in W,\ \  {\rm \ for\ all\ } m\in\Z.
	\end{align*}
Therefore $W$ is a $\GG$-submodule of $\Omega(\lambda,\eta_1,\sigma_1,0)\otimes\Omega(\lambda,\eta_2,\sigma_2,0)$.
\end{proof}	

Now we combine Propositions \ref{nonzeroV-1}, \ref{lambda12'} and \ref{lambda1=2'} into the following main theorem.	
\begin{theorem}\label{irr-11}
	Let $\lambda_1, \lambda_2, \sigma_1, \sigma_2\in\C^*$ and $\eta_1,\eta_2\in\C$.
	Then $\Omega(\lambda_1,\eta_1,\sigma_1,0)\otimes\Omega(\lambda_2,\eta_2,\sigma_2,0)$ is an irreducible $\GG$-module if and only if $\lambda_1\ne \lambda_2$.
\end{theorem}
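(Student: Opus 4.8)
The plan is to reduce the statement to the three propositions that precede it, exactly as the author's remark ``Now we combine Propositions \ref{nonzeroV-1}, \ref{lambda12'} and \ref{lambda1=2'} into the following main theorem'' suggests, so the proof should be short. First I would prove the ``if'' direction: assume $\lambda_1\ne\lambda_2$ and let $V$ be a nonzero $\GG$-submodule of $\Omega(\lambda_1,\eta_1,\sigma_1,0)\otimes\Omega(\lambda_2,\eta_2,\sigma_2,0)$. By Proposition \ref{nonzeroV-1} we have $1\otimes 1\in V$, hence $\langle 1\otimes 1\rangle\subseteq V$; but by Proposition \ref{lambda12'} we have $\langle 1\otimes 1\rangle=\Omega(\lambda_1,\eta_1,\sigma_1,0)\otimes\Omega(\lambda_2,\eta_2,\sigma_2,0)$, so $V$ is the whole module. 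Therefore the tensor product module is irreducible.

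For the ``only if'' direction I would argue contrapositively: suppose $\lambda_1=\lambda_2=:\lambda$. Then Proposition \ref{lambda1=2'} exhibits a nonzero proper $\GG$-submodule
\[
W=\mathrm{span}\Bigl\{\sum_{t=0}^j{\mathrm C}_j^tX^iY^{j-t}\otimes X_1^kY_1^t~\Big|~i,j,k\in\N\Bigr\}
\]
of $\Omega(\lambda,\eta_1,\sigma_1,0)\otimes\Omega(\lambda,\eta_2,\sigma_2,0)$; the only thing worth checking explicitly is that $W$ is genuinely proper and nonzero, which is immediate since $1\otimes 1\in W$ while, e.g., $X\otimes 1\notin W$ (every spanning vector with $j=0$ is of the form $X^iY^0\otimes X_1^kY_1^0$, and no nontrivial linear combination producing $X\otimes 1$ alone can have a $T$-degree-zero part without also forcing cross terms — more simply, $W$ does not contain $1\otimes Y_1$). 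Hence the module is reducible, which contradicts irreducibility. Combining the two directions gives the theorem.

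I do not expect a genuine obstacle here, since all the substantive work — the ``lowest weight vector'' argument of Lemma \ref{lower-degree} and Proposition \ref{nonzeroV-1}, the cyclicity argument of Proposition \ref{lambda12'}, and the submodule verification of Proposition \ref{lambda1=2'} — has already been carried out. The only mild care needed is bookkeeping: making sure that the hypotheses $\lambda_1,\lambda_2,\sigma_1,\sigma_2\in\C^*$, $\eta_1,\eta_2\in\C$ match verbatim those of the three propositions (they do), and noting that Proposition \ref{lambda12'} already incorporates the hypothesis $\lambda_1\ne\lambda_2$ while Proposition \ref{lambda1=2'} covers the complementary case $\lambda_1=\lambda_2$, so the two cases together are exhaustive. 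If one wants to be completely self-contained about the ``only if'' direction one could instead observe directly that when $\lambda_1=\lambda_2$ the diagonal-type relations among $L_m,H_m,I_m,J_m$ force the $\GG$-action to preserve the span of symmetrized tensors, but invoking Proposition \ref{lambda1=2'} is cleaner. Thus the proof is a three-line assembly of the preceding results.
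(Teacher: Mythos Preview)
Your proposal is correct and follows exactly the paper's own approach: the paper explicitly states that Theorem \ref{irr-11} is obtained by combining Propositions \ref{nonzeroV-1}, \ref{lambda12'} and \ref{lambda1=2'}, and your argument does precisely this assembly. The extra sentence you add about checking that $W$ is nonzero and proper is already contained in the statement of Proposition \ref{lambda1=2'} (which asserts $W$ is a nonzero proper subspace), so it is harmless but unnecessary.
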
	

By definitions of $\GG$-modules $\Omega(\lambda_1,\eta_1,\sigma_1,0), \Omega(\lambda_1,\eta_1,0,\sigma_1)$ and Lie brackets of $\GG$ 
the following theorem can be deduced via similar proceedings to Theorem \ref{irr-11}.
\begin{theorem}\label{irr-22}
	Let $\lambda_1, \lambda_2, \sigma_1, \sigma_2\in\C^*$ and $\eta_1,\eta_2\in\C$.
	Then $\Omega(\lambda_1,\eta_1,0,\sigma_1)\otimes\Omega(\lambda_2,\eta_2,0,\sigma_2)$ is an irreducible $\GG$-module if and only if $\lambda_1\ne \lambda_2$.
\end{theorem}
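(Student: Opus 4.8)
The plan is to reduce Theorem \ref{irr-22} to the work already done for Theorem \ref{irr-11} by exhibiting an explicit symmetry of $\GG$ that interchanges the two types of free modules $\Omega(\lambda,\eta,\sigma,0)$ and $\Omega(\lambda,\eta,0,\sigma)$. Concretely, I would look for an automorphism (or anti-automorphism composed with a sign twist) $\tau$ of $\GG$ fixing $L_m$ and $H_m$ up to scalars and swapping $I_m \leftrightarrow J_m$. Inspecting the relations \eqref{planar Galilean conformal algebra}, the map $L_m \mapsto L_m$, $H_m \mapsto -H_m$, $I_m \mapsto J_m$, $J_m \mapsto I_m$ preserves all brackets: indeed $[H_m,I_n]=I_{m+n}$ becomes $[-H_m,J_n]=-(-J_{m+n})=J_{m+n}$, and $[H_m,J_n]=-J_{m+n}$ becomes $[-H_m,I_n]=I_{m+n}$, matching. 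Under this automorphism, pulling back $\Omega(\lambda,\eta_1,0,\sigma_1)$ along $\tau$ yields a module on which $H_m$ acts by $-\lambda^m S f(S,T-m)$, i.e. after relabeling $S\mapsto -X$, $T\mapsto Y$ one recovers exactly the action pattern of $\Omega(\lambda,\eta_1,\sigma_1,0)$ with the same parameters $\lambda,\eta_1,\sigma_1$. Hence $\tau^*$ carries $\Omega(\lambda_1,\eta_1,0,\sigma_1)\otimes\Omega(\lambda_2,\eta_2,0,\sigma_2)$ to $\Omega(\lambda_1,\eta_1,\sigma_1,0)\otimes\Omega(\lambda_2,\eta_2,\sigma_2,0)$ as $\GG$-modules.

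Once this identification is in place, the proof is immediate: a $\GG$-module $M$ is irreducible if and only if the pullback $\tau^*M$ is irreducible, since $\tau$ is an automorphism and submodules correspond bijectively. Therefore $\Omega(\lambda_1,\eta_1,0,\sigma_1)\otimes\Omega(\lambda_2,\eta_2,0,\sigma_2)$ is irreducible iff $\Omega(\lambda_1,\eta_1,\sigma_1,0)\otimes\Omega(\lambda_2,\eta_2,\sigma_2,0)$ is, and by Theorem \ref{irr-11} the latter holds iff $\lambda_1\ne\lambda_2$. This disposes of the theorem cleanly without reproving the $1\otimes 1$-generation lemmas.

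If one prefers to avoid the automorphism argument (perhaps because the paper's hint says ``via similar proceedings''), the alternative is to repeat the three-proposition scheme directly on $\C[S_1,T_1]\otimes\C[S_2,T_2]$. First I would prove the analogue of Proposition \ref{nonzeroV-1}: for $\lambda_1\ne\lambda_2$, every nonzero submodule $V$ contains $1\otimes 1$. This needs a degree-lowering lemma modeled on Lemma \ref{lower-degree}, now using the action of $J_m$ (which is the nonzero ``translation'' operator in these modules) together with $J_mv-\lambda_1^m\sigma_1 v-\lambda_2^m\sigma_2 v$; the Vandermonde/invertible-matrix computations are formally identical, with $S\mapsto S+1$ replacing $X\mapsto X-1$. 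Second, the analogue of Proposition \ref{lambda12'}: since that proof used only $H_m$ and $L_m$, and the $H_m,L_m$ actions on $\Omega(\lambda,\eta,0,\sigma)$ differ from those on $\Omega(\lambda,\eta,\sigma,0)$ only by the innocuous sign changes $-mX\mapsto +mS$ and $X\mapsto S$, the same induction on $i+j+k+l$ with the same invertible $4\times 4$ matrix $A$ goes through. Third, for $\lambda_1=\lambda_2=\lambda$, the analogue of Proposition \ref{lambda1=2'}: the subspace $W'=\mathrm{span}\{\sum_{t=0}^j{\mathrm C}_j^tS_1^iT_1^{j-t}\otimes S_2^kT_2^t\mid i,j,k\in\N\}$ is a proper nonzero submodule, checked by the same bookkeeping with Eq.~\eqref{albega}. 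Combining these three gives the theorem.

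The main obstacle in the first approach is simply verifying carefully that $\tau$ really is a Lie algebra automorphism on \emph{all} the brackets (the $H$–$H$, $I$–$I$, $J$–$J$, $I$–$J$ relations are trivially preserved, and the $L$-relations are untouched, so only the two $H$–$I$/$H$–$J$ lines need the sign check done above) and that the coordinate relabeling $S\mapsto -X$ correctly matches the $L_m$-action, including the term $+mS\mapsto -mX$ which lines up with the $-mX$ in the definition of $\Omega(\lambda,\eta,\sigma,0)$. In the second approach the only real work is the degree-lowering lemma, where one must confirm that the relevant matrices built from $\lambda_1,\lambda_2$ are invertible exactly under the hypothesis $\lambda_1\ne\lambda_2$; this is routine since they are, up to row operations, Vandermonde-type. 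Either way there is no genuinely new difficulty beyond Section \ref{sub4-1}, which is why the statement can be asserted ``via similar proceedings.''
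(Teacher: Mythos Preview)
Your proposal is correct. The paper's own proof is nothing more than the phrase ``via similar proceedings to Theorem \ref{irr-11},'' i.e., exactly your second approach: rerun Lemma \ref{lower-degree}, Proposition \ref{nonzeroV-1}, Proposition \ref{lambda12'}, and Proposition \ref{lambda1=2'} with $J_m$ playing the role of $I_m$ and the sign changes $X\mapsto S$, $-mX\mapsto +mS$, $X-1\mapsto S+1$ throughout.

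Your first approach via the involutive automorphism $\tau:L_m\mapsto L_m,\ H_m\mapsto -H_m,\ I_m\leftrightarrow J_m$ is a genuinely different and cleaner route. It works exactly as you say: the bracket checks are correct, the relabeling $S\mapsto -X,\ T\mapsto Y$ does intertwine all four actions (your verification of the $+mS\mapsto -mX$ match in the $L_m$-term is the only delicate point, and it is fine), and $\tau^*$ commutes with tensor products, so irreducibility transfers directly from Theorem \ref{irr-11}. This argument has the advantage of \emph{explaining} why the two families of tensor products behave identically, whereas the paper merely asserts the parallelism and leaves the reader to redo the computations. The cost is that you must justify the automorphism once, but that is a handful of lines rather than three propositions.
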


\subsection{Isomorphism classes}
In this subsection, we get the isomorphism classes of irreducible tensor product modules $\Omega(\lambda_1,\eta_1,\sigma_1,0)\otimes\Omega(\lambda_2,\eta_2,\sigma_2,0)$ and $\Omega(\lambda_1,\eta_1,0,\sigma_1)\otimes\Omega(\lambda_2,\eta_2,0,\sigma_2)$ over $\GG$ respectively,
where $\lambda_1, \lambda_2, \sigma_1, \sigma_2\in\C^*$ and $\eta_1,\eta_2\in\C$ with $\lambda_1\ne\lambda_2$.

Suppose that $V$ and $W$ are $\GG$-modules. Then we have the tensor product $\GG$-modules $V\otimes W$ and $W\otimes V$. It is well-known that the map
$$\xi: V\otimes W\rightarrow W\otimes V,\quad v\otimes w\mapsto w\otimes v,\ \ \ \ 
{\rm for\ \ any \ \ }v\in V,\ w\in W,$$
is a $\GG$-module isomorphism. 

\begin{theorem}\label{iso-11}
	Let $\lambda_1, \lambda_2,\lambda_1', \lambda_2', \sigma_1, \sigma_2, \sigma_1', \sigma_2'\in\C^*$ and $\eta_1,\eta_2,\eta_1',\eta_2'\in\C$ with $\lambda_1\ne\lambda_2, \lambda_1'\ne\lambda_2'$.
	Then the irreducible $\GG$-modules $$\Omega(\lambda_1,\eta_1,\sigma_1,0)\otimes\Omega(\lambda_2,\eta_2,\sigma_2,0){\rm \ \  and\ \ } \Omega(\lambda_1',\eta_1',\sigma_1',0)\otimes\Omega(\lambda_2',\eta_2',\sigma_2',0)$$
	are isomorphic if and only if 
	$$(\lambda_1,\eta_1,\sigma_1)=(\lambda_1',\eta_1',\sigma_1'), \ \ (\lambda_2,\eta_2,\sigma_2)=(\lambda_2',\eta_2',\sigma_2'),$$
	or 
	$$(\lambda_1,\eta_1,\sigma_1)=(\lambda_2',\eta_2',\sigma_2'), \ \ (\lambda_2,\eta_2,\sigma_2)=(\lambda_1',\eta_1',\sigma_1').$$
\end{theorem}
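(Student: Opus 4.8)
The plan is to mimic the proof of Theorem \ref{iso-12}, but to exploit that here both tensor factors are of the \emph{same} type $\Omega(\cdot,\cdot,\cdot,0)$; this extra symmetry is exactly what forces the second alternative to appear in the conclusion. The ``if'' part is immediate: if the triples agree directly, the identity map is a $\GG$-module isomorphism, and if they agree after interchanging the two factors, the flip map $\xi$ recalled just before the theorem does the job. For the ``only if'' part, let $\varphi\colon \Omega(\lambda_1,\eta_1,\sigma_1,0)\otimes\Omega(\lambda_2,\eta_2,\sigma_2,0)\to\Omega(\lambda_1',\eta_1',\sigma_1',0)\otimes\Omega(\lambda_2',\eta_2',\sigma_2',0)$ be a $\GG$-module isomorphism, write $\varphi(1\otimes 1)=\sum\be_{ijkl}X^iY^j\otimes X_1^kY_1^l$, and set $\deg(\varphi(1\otimes 1))=(p',q',s',t')$, so $\be_{p'q's't'}\ne0$.

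The first step is to compare $I_m\varphi(1\otimes 1)$ with $\varphi(I_m(1\otimes 1))$. Since $I_m(1\otimes 1)=(\lambda_1^m\sigma_1+\lambda_2^m\sigma_2)(1\otimes 1)$, and since in the target module $I_m$ only lowers the degree of a monomial in the order $\succ$, reading off the coefficient of $X^{p'}Y^{q'}\otimes X_1^{s'}Y_1^{t'}$ gives the scalar identity $\lambda_1^m\sigma_1+\lambda_2^m\sigma_2=(\lambda_1')^m\sigma_1'+(\lambda_2')^m\sigma_2'$ for all $m\in\Z$. Using linear independence of pairwise distinct geometric sequences (equivalently, evaluating at $m=0,1,2,3$ and invoking a Vandermonde-type determinant), together with $\lambda_1\ne\lambda_2$, $\lambda_1'\ne\lambda_2'$ and $\sigma_i,\sigma_i'\in\C^*$, one concludes that $\{(\lambda_1,\sigma_1),(\lambda_2,\sigma_2)\}=\{(\lambda_1',\sigma_1'),(\lambda_2',\sigma_2')\}$ as multisets. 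Replacing $\varphi$ by $\xi\circ\varphi$ if necessary, we may thus assume $\lambda_1=\lambda_1'$, $\sigma_1=\sigma_1'$, $\lambda_2=\lambda_2'$, $\sigma_2=\sigma_2'$; it then remains to show $\varphi(1\otimes 1)$ is a nonzero scalar multiple of $1\otimes 1$ and $\eta_1=\eta_1'$, $\eta_2=\eta_2'$ (and if $\varphi$ had to be composed with $\xi$, the same argument applied to $\xi\circ\varphi$ instead yields the second alternative of the conclusion).

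Under these equalities, $I_m\varphi(1\otimes 1)-\lambda_1^m\sigma_1\varphi(1\otimes 1)-\lambda_2^m\sigma_2\varphi(1\otimes 1)=\varphi\big(I_m(1\otimes 1)-\lambda_1^m\sigma_1(1\otimes 1)-\lambda_2^m\sigma_2(1\otimes 1)\big)=0$ for all $m\in\Z$, so Lemma \ref{lower-degree} (applied to $v=\varphi(1\otimes 1)$) forces $\deg(\varphi(1\otimes 1))=(0,0,0,0)$, i.e. $\varphi(1\otimes 1)=\al(1\otimes 1)$ for some $\al\in\C^*$. Applying $H_m$ to $1\otimes 1$ and comparing with $H_m\varphi(1\otimes 1)$, then using $\lambda_1\ne\lambda_2$ (Vandermonde in $m=0,1$), gives $\varphi(X\otimes 1)=\al(X\otimes 1)$ and $\varphi(1\otimes X_1)=\al(1\otimes X_1)$. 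Finally, applying $L_m$ to $1\otimes 1$, substituting these values, and evaluating at $m=0,1,2,3$ produces a linear system with coefficient matrix $\left(\begin{smallmatrix}1&1&0&0\\ \lambda_1&\lambda_2&\lambda_1&\lambda_2\\ \lambda_1^2&\lambda_2^2&2\lambda_1^2&2\lambda_2^2\\ \lambda_1^3&\lambda_2^3&3\lambda_1^3&3\lambda_2^3\end{smallmatrix}\right)$, invertible since $\lambda_1\ne\lambda_2$; this yields simultaneously $\varphi(Y\otimes 1)=\al(Y\otimes 1)$, $\varphi(1\otimes Y_1)=\al(1\otimes Y_1)$, $\eta_1=\eta_1'$ and $\eta_2=\eta_2'$, as desired.

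I expect the main obstacle to be the passage from the scalar identity $\lambda_1^m\sigma_1+\lambda_2^m\sigma_2=(\lambda_1')^m\sigma_1'+(\lambda_2')^m\sigma_2'$ $(m\in\Z)$ to the multiset equality of the pairs $(\lambda_i,\sigma_i)$: this is the one genuinely new point compared with Theorem \ref{iso-12}, since it is here that the possibility of interchanging the two factors arises, and one must check carefully that $\lambda_1\ne\lambda_2$ and $\lambda_1'\ne\lambda_2'$ leave precisely the two stated options. The remainder runs parallel to Theorem \ref{iso-12}, with Lemma \ref{lower-degree} taking over the bookkeeping role played there by the explicit $I_m$- and $J_m$-computations; the companion statement for $\Omega(\lambda_1,\eta_1,0,\sigma_1)\otimes\Omega(\lambda_2,\eta_2,0,\sigma_2)$ (Theorem \ref{iso-22}) then follows by interchanging the roles of $I_m$ and $J_m$ throughout.
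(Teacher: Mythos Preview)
Your proof is correct and follows essentially the same approach as the paper: extract the scalar identity $\lambda_1^m\sigma_1+\lambda_2^m\sigma_2=(\lambda_1')^m\sigma_1'+(\lambda_2')^m\sigma_2'$ from the leading coefficient of $I_m\varphi(1\otimes 1)-\varphi(I_m(1\otimes 1))$, deduce the matching of $(\lambda_i,\sigma_i)$ up to swap, reduce to the case of direct equality via the flip map, and then finish as in Theorem~\ref{iso-12} using $H_m$ and $L_m$. The only cosmetic differences are that the paper obtains the $\lambda$-matching by an explicit Vandermonde-determinant case analysis rather than your appeal to linear independence of geometric sequences, and that you invoke Lemma~\ref{lower-degree} directly to force $\varphi(1\otimes 1)\in\C(1\otimes 1)$ whereas the paper refers back to the computations of Theorem~\ref{iso-12}.
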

\begin{proof}
	$(\Leftarrow)$ This direction is trivial.
	
	$(\Rightarrow)$
	Suppose that $$\phi:\Omega(\lambda_1,\eta_1,\sigma_1,0)\otimes\Omega(\lambda_2,\eta_2,\sigma_2,0)\rightarrow
	\Omega(\lambda_1',\eta_1',\sigma_1',0)\otimes\Omega(\lambda_2',\eta_2',\sigma_2',0)$$ is a $\GG$-module isomorphism.
	Then we can denote $$\phi(1\otimes 1)=\sum_{i=0}^a\sum_{j=0}^b\sum_{k=0}^c\sum_{l=0}^d\gamma_{ijkl}X^iY^j\otimes X_1^kY_1^l,$$
	where $a,b,c,d\in\N,\gamma_{ijkl}\in\C$ and $\gamma_{ijkl}$ are not all zero.
	Denote $\deg(\phi(1\otimes 1))=(a',b',c',d')$, where $$0\leq a'\leq a,\  0\leq b'\leq b,\  0\leq c'\leq c {\rm \ \ and\ \ } 0\leq d'\leq d.$$ So $\gamma_{a'b'c'd'}\ne 0$.
	For any $m\in\Z$, we compute
	\begin{align}
	0= &I_m\phi(1\otimes 1)-\phi(I_m(1\otimes 1))  \notag\\
	=&\sum_{i=0}^a\sum_{j=0}^b\sum_{k=0}^c\sum_{l=0}^d([(\lambda_1')^m\sigma_1'\gamma_{ijkl}(X-1)^i(Y-m)^j-\lambda_1^m\sigma_1\gamma_{ijkl}X^iY^j]\otimes X_1^kY_1^l)  \notag    \\
	&+\sum_{i=0}^a\sum_{j=0}^b\sum_{k=0}^c\sum_{l=0}^d(\gamma_{ijkl}X^iY^j\otimes [(\lambda_2')^m\sigma_2'(X_1-1)^k(Y_1-m)^l-\lambda_2^m\sigma_2X_1^kY_1^l]) \notag\\
	=&[(\lambda_1')^m\sigma_1'-\lambda_1^m\sigma_1+(\lambda_2')^m\sigma_2'-\lambda_2^m\sigma_2]\gamma_{p'q's't'}X^{p'}Y^{q'}\otimes X_1^{s'}Y_1^{t'} + w', \label{iso-Im-1} 
	\end{align}
	where $w'\in\Omega(\lambda_1',\eta_1',\sigma_1',0)\otimes\Omega(\lambda_2',\eta_2',\sigma_2',0)$ and 
	$$ w'=0, {\rm \ or\ } (p',q',s',t')\succ\deg(w').$$
	Taking $m=0,1,2,3$ in Eq.~\eqref{iso-Im-1}, we get
	\begin{equation}\label{44}
	\begin{pmatrix}
	1&-1&1&-1\\            
	\lambda_1&-\lambda_1'&\lambda_2&-\lambda_2'\\
	\lambda_1^2&-(\lambda_1')^2&\lambda_2^2&-(\lambda_2')^2\\
   \lambda_1^3&-(\lambda_1')^3&\lambda_2^3&-(\lambda_2')^3\\
	\end{pmatrix}
		\begin{pmatrix}
\sigma_1\\
\sigma_1'\\
\sigma_2\\
\sigma_2'            
	\end{pmatrix}=
\begin{pmatrix}
0\\
0\\
0\\
0            
\end{pmatrix}.
	\end{equation}
Thus the determinant of the matrix
\[
\begin{pmatrix}
1&-1&1&-1\\            
\lambda_1&-\lambda_1'&\lambda_2&-\lambda_2'\\
\lambda_1^2&-(\lambda_1')^2&\lambda_2^2&-(\lambda_2')^2\\
\lambda_1^3&-(\lambda_1')^3&\lambda_2^3&-(\lambda_2')^3\\
\end{pmatrix}
\]
is zero since $\sigma_1, \sigma_1',\sigma_2,\sigma_2'\in\C^*$.	
By the simple computations we obtain
$$(\lambda_1-\lambda_1')(\lambda_2-\lambda_1)(\lambda_2-\lambda_1')(\lambda_1-\lambda_2')(\lambda_2'-\lambda_1')(\lambda_2'-\lambda_2)=0,$$
which yields $$\lambda_1=\lambda_1', {\rm \ \ or\ \ }
\lambda_1=\lambda_2',{\rm \ \ or\ \ }\lambda_2=\lambda_1',{\rm \ \ or\ \ }\lambda_2=\lambda_2',$$
since $\lambda_1\ne \lambda_2,\lambda_1'\ne \lambda_2'$.
Without loss of generality	we can assume that (otherwise, we can replace $\Omega(\lambda_1,\eta_1,\sigma_1,0)\otimes\Omega(\lambda_2,\eta_2,\sigma_2,0)$
by 
$\Omega(\lambda_2,\eta_2,\sigma_2,0)\otimes\Omega(\lambda_1,\eta_1,\sigma_1,0)$)
$$\lambda_1=\lambda_1', {\rm \ \ or\ \ }\lambda_1=\lambda_2'.$$

\begin{case}\label{11'-1}
	$\lambda_1=\lambda_1'$.
\end{case}	
Then by Eq.~\eqref{44} we get 
\begin{equation}
\begin{pmatrix}
1&1&-1\\            
\lambda_1&\lambda_2&-\lambda_2'\\
\lambda_1^2&\lambda_2^2&-(\lambda_2')^2
\end{pmatrix}
\begin{pmatrix}
\sigma_1-\sigma_1'\\
\sigma_2\\
\sigma_2'            
\end{pmatrix}=
\begin{pmatrix}
0\\
0\\
0            
\end{pmatrix},
\end{equation}
which implies 
$$ \lambda_1=\lambda_2', {\rm \ \ or\ \ }\lambda_2=\lambda_2',$$
since $\sigma_2, \sigma_2'\in\C^*$ and $\lambda_1\ne \lambda_2$.
Moreover, if $\lambda_1=\lambda_2'$, then $$\lambda_1'=\lambda_1=\lambda_2',$$
which is a contradiction. So $\lambda_2=\lambda_2'$.
Using $\lambda_1=\lambda_1'$ and $\lambda_2=\lambda_2'$ we can easily get that
$$\sigma_1=\sigma_1' {\rm  \ \ and\ \ } \sigma_2=\sigma_2',$$
from Eq.~\eqref{iso-Im-1}.

Now substituting $$\lambda_1=\lambda_1',\ \lambda_2=\lambda_2',\ \sigma_1=\sigma_1'{\rm\ \ \and\ \ } \sigma_2=\sigma_2',$$
into Eq.~\eqref{iso-Im-1} we deduce that
for any $m\in\Z$, 
\begin{align}
0= &I_m\phi(1\otimes 1)-\phi(I_m(1\otimes 1))  \notag\\
=&\sum_{i=0}^a\sum_{j=0}^b\sum_{k=0}^c\sum_{l=0}^d(\lambda_1^m\sigma_1\gamma_{ijkl}[(X-1)^i(Y-m)^j-X^iY^j]\otimes X_1^kY_1^l)   \notag   \\
&+\sum_{i=0}^a\sum_{j=0}^b\sum_{k=0}^c\sum_{l=0}^d(\lambda_2^m\sigma_2\gamma_{ijkl}X^iY^j\otimes [(X_1-1)^k(Y_1-m)^l-X_1^kY_1^l]). \label{iso-Im-1-1}
\end{align}
Then by similar disscussions to Theorem \ref{iso-12} we can deduce that 
there exists $\gamma\in\C^*$ such that
\begin{align*}
&\phi(1\otimes 1)=\gamma(1\otimes 1),\ \phi(X\otimes 1)=\gamma(X\otimes 1),\ \phi(1\otimes X_1)=\gamma(1\otimes X_1),\\
&\phi(Y\otimes 1)=\gamma (Y\otimes 1),\ \phi(1\otimes Y_1)=\gamma(1\otimes Y_1),\ \eta_1=\eta_1'\ {\rm \ and \ }\ \eta_2=\eta_2'.
\end{align*}
Hence we have
$$(\lambda_1,\eta_1,\sigma_1)=(\lambda_1',\eta_1',\sigma_1'){\rm\ \  and \ \ } (\lambda_2,\eta_2,\sigma_2)=(\lambda_2',\eta_2',\sigma_2').$$

\begin{case}
	$\lambda_1=\lambda_2'$.
\end{case}
Replacing $$\Omega(\lambda_1',\eta_1',\sigma_1',0)\otimes\Omega(\lambda_2',\eta_2',\sigma_2',0)$$
by 
$$\Omega(\lambda_2',\eta_2',\sigma_2',0)\otimes\Omega(\lambda_1',\eta_1',\sigma_1',0).$$
Then from the processings of Case \ref{11'-1} we can deduce that 
$$(\lambda_1,\eta_1,\sigma_1)=(\lambda_2',\eta_2',\sigma_2'){\rm\ \  and \ \ } (\lambda_2,\eta_2,\sigma_2)=(\lambda_1',\eta_1',\sigma_1').$$
This completes the proof of the theorem.
\end{proof}

By similar proof to Theorem \ref{iso-11} we can get the following theorem.
\begin{theorem}\label{iso-22}
	Let $\lambda_1, \lambda_2,\lambda_1', \lambda_2', \sigma_1, \sigma_2, \sigma_1', \sigma_2'\in\C^*$ and $\eta_1,\eta_2,\eta_1',\eta_2'\in\C$ with $\lambda_1\ne\lambda_2, \lambda_1'\ne\lambda_2'$.
	Then the irreducible $\GG$-modules $$\Omega(\lambda_1,\eta_1,0,\sigma_1)\otimes\Omega(\lambda_2,\eta_2,0,\sigma_2){\rm \ \  and\ \ } \Omega(\lambda_1',\eta_1',0,\sigma_1')\otimes\Omega(\lambda_2',\eta_2',0,\sigma_2')$$
	are isomorphic if and only if 
	$$(\lambda_1,\eta_1,\sigma_1)=(\lambda_1',\eta_1',\sigma_1'), \ \ (\lambda_2,\eta_2,\sigma_2)=(\lambda_2',\eta_2',\sigma_2'),$$
	or 
	$$(\lambda_1,\eta_1,\sigma_1)=(\lambda_2',\eta_2',\sigma_2'), \ \ (\lambda_2,\eta_2,\sigma_2)=(\lambda_1',\eta_1',\sigma_1').$$
\end{theorem}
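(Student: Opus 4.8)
The plan is to prove Theorem \ref{iso-22} by transporting the entire argument of Theorem \ref{iso-11} through the obvious analogy between the modules $\Omega(\lambda,\eta,\sigma,0)$ and $\Omega(\lambda,\eta,0,\sigma)$. First I would observe that the roles of $I_m$ and $J_m$ are swapped: in $\Omega(\lambda,\eta,0,\sigma)$ the operator $J_m$ acts by the ``shift-and-rescale'' formula $J_m(f(S,T))=\lambda^m\sigma f(S+1,T-m)$ while $I_m$ acts as $0$. So the first step is to rewrite Lemma \ref{lower-degree} with $I_m$ replaced by $J_m$ and with the polynomial generators $(X,Y)$, $(X_1,Y_1)$ replaced by $(S,T)$, $(S_1,T_1)$ (here $\Omega(\lambda_2,\eta_2,0,\sigma_2)=\C[S_1,T_1]$); the degree-lowering computations are formally identical since $S\mapsto S+1$ and $S\mapsto S-1$ have the same effect on leading terms, and the Vandermonde-type matrices appearing there are unchanged. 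This yields the analogues of Propositions \ref{nonzeroV-1} and \ref{lambda12'} for $\Omega(\lambda_1,\eta_1,0,\sigma_1)\otimes\Omega(\lambda_2,\eta_2,0,\sigma_2)$, which in particular give that any nonzero submodule contains $1\otimes 1$ and, when $\lambda_1\ne\lambda_2$, that $1\otimes 1$ generates the whole module.

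Next I would set up the isomorphism $\phi\colon\Omega(\lambda_1,\eta_1,0,\sigma_1)\otimes\Omega(\lambda_2,\eta_2,0,\sigma_2)\to\Omega(\lambda_1',\eta_1',0,\sigma_1')\otimes\Omega(\lambda_2',\eta_2',0,\sigma_2')$, write $\phi(1\otimes 1)$ as a polynomial with top-degree multi-index $(a',b',c',d')$, and apply the identity $0=J_m\phi(1\otimes 1)-\phi(J_m(1\otimes 1))$ for $m=0,1,2,3$. Exactly as in Eq.~\eqref{iso-Im-1} and Eq.~\eqref{44}, the top-degree coefficient forces a $4\times 4$ determinant built from $\lambda_1,\lambda_1',\lambda_2,\lambda_2'$ to vanish, which factors as $(\lambda_1-\lambda_1')(\lambda_2-\lambda_1)(\lambda_2-\lambda_1')(\lambda_1-\lambda_2')(\lambda_2'-\lambda_1')(\lambda_2'-\lambda_2)=0$; since $\lambda_1\ne\lambda_2$ and $\lambda_1'\ne\lambda_2'$, one of $\lambda_1=\lambda_1'$ or $\lambda_1=\lambda_2'$ holds after possibly composing with the flip isomorphism $\xi$. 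Then, mirroring Case \ref{11'-1}, in the branch $\lambda_1=\lambda_1'$ a $3\times 3$ Vandermonde-type relation forces $\lambda_2=\lambda_2'$, and feeding this back into the $J_m$-identity gives $\sigma_1=\sigma_1'$, $\sigma_2=\sigma_2'$. Finally, using $H_m$ to pin down $\phi(S\otimes 1)=\gamma(S\otimes 1)$ and $\phi(1\otimes S_1)=\gamma(1\otimes S_1)$ (the $\lambda_1\ne\lambda_2$ separation), and then $L_m$ for $m=0,1,2,3$ to extract $\eta_1=\eta_1'$, $\eta_2=\eta_2'$ and $\phi(T\otimes 1)=\gamma(T\otimes 1)$, $\phi(1\otimes T_1)=\gamma(1\otimes T_1)$, completes the first case; the case $\lambda_1=\lambda_2'$ reduces to it by precomposing with $\xi$ on the target.

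I do not expect a genuine obstacle here: the content is that every computation in Theorems \ref{irr-11} and \ref{iso-11} used only the structural shape of the action formulas, not the particular sign of the shift in the first slot or which of $I,J$ is nonzero, so the proof is a faithful translation. The one point that deserves a careful sentence rather than a wave of the hand is checking that the specific matrices in the analogue of Lemma \ref{lower-degree}(3)--(4) — the $3\times 3$ and $4\times 4$ arrays with entries $-\lambda_i^m$ and $-m\lambda_i^m$ — remain invertible under $\lambda_1\ne\lambda_2$; but these are literally the same matrices as in the original lemma, so their nonvanishing determinants carry over verbatim. Thus the ``main obstacle'' is purely bookkeeping: keeping the swapped roles of $I_m\leftrightarrow J_m$ and the relabelled variables straight so that no sign or index is dropped in the translation.
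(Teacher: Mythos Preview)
Your proposal is correct and is exactly the approach the paper takes: the paper's own proof of Theorem \ref{iso-22} is the single line ``By similar proof to Theorem \ref{iso-11} we can get the following theorem,'' and you have spelled out precisely that translation, swapping $I_m\leftrightarrow J_m$ and $(X,Y),(X_1,Y_1)\leftrightarrow(S,T),(S_1,T_1)$. One tiny caveat: the $3\times 3$ and $4\times 4$ matrices in the analogue of Lemma \ref{lower-degree}(3)--(4) are not \emph{literally} the same (the shift $S\mapsto S+1$ flips the sign of two columns relative to $X\mapsto X-1$), but their determinants are unchanged, so your invertibility claim stands.
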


\section{Applications}
Inspried by Section \ref{Sec3} and Section \ref{Sec4} we obtain some results about tensor product modules over the Witt algebra $\WW$ and the Heisenberg-Virasoro algebra $\LL$.

{\bf Witt algebra}:
Recall that for any $\lambda\in\C^*,\al\in\C$,
 the polynomial algebra $\C[Y]$ has a $\WW$-module structure with the following actions
$$L_m(f(Y))=\lambda^m(Y+m\al)f(Y-m), \ \ \ {\rm \ \ for\ \ all\ \ } m\in\Z,\ f(Y)\in\C[Y].$$
Denote this module by $\Omega(\lambda, \al)$.
Thanks to \cite{LZ2}, we know that $\Omega(\lambda, \al)$ is irreducible if and only if $\al\ne 0$.
Moreover, the \cite{TZ} indicates that $\{\Omega(\lambda, \al)~|~\lambda\in\C^*, \al\in\C\}$ exhaust all $\UU(\C L_0)$-free module of rank one over $\WW$ up to isomorphism.
By Proposition \ref{lambda1=2} we can easily get the following results.

	$\bullet$ Let $\lambda,\al_1,\al_2\in\C^*$. Suppose that $\Omega(\lambda,\al_1)\otimes\Omega(\lambda,\al_2)$ is the tensor product module over $\WW$.
	Denote 
	\begin{align}\label{U}
	U=\mathrm{span}\{\sum_{t=0}^j{\mathrm C}_j^tY^{j-t}\otimes Y^t~|~j\in\N\}.
	\end{align}
	Then $U$ is a nonzero proper submodule of $\Omega(\lambda,\al_1)\otimes\Omega(\lambda,\al_2)$.
	Consequently, $\Omega(\lambda,\al_1)\otimes\Omega(\lambda,\al_2)$ is a reducible $\WW$-module.

Therefore we obtain a necessary condition for the tensor product module $\Omega(\lambda_1,\al_1)\otimes\Omega(\lambda_2,\al_2)$ over $\WW$ to be irreducible, i.e.,
if the tensor product module $\Omega(\lambda_1,\al_1)\otimes\Omega(\lambda_2,\al_2)$ over $\WW$ is irreducible, then $\lambda_1\ne\lambda_2$.

{\bf Heisenberg-Virasoro algebra}:
For $\lambda\in\C^*,\al,\be\in\C$,
it is easy to see that the polynomial algebra $\C[Y]$ is an $\LL$-module with the following actions
$$L_m(f(Y))=\lambda^m(Y+m\al)f(Y-m),\ \ H_m(f(Y))=\be\lambda^mf(Y-m), \ {\rm \ \ for\ \ all\ \ } m\in\Z,\ f(Y)\in\C[Y].$$
We denote by $\Omega(\lambda, \al, \be)$ this module.
From \cite{CG} we know that $\Omega(\lambda, \al,\be)$ is an irreducible $\LL$-module if and only if $(\al,\be)\ne (0,0)$.
Furthermore, $\{\Omega(\lambda, \al, \be)~|~\lambda\in\C^*, \al,\be\in\C\}$ exhaust all $\UU(\C L_0)$-free module of rank one over $\LL$ up to isomorphism.
From Proposition \ref{lambda1=2} the following conclusions are clear.

	$\bullet$ Let $\lambda\in\C^*,\al_1,\al_2,\be_1,\be_2\in\C$ with $(\al_1,\be_1)\ne(0,0), (\al_2,\be_2)\ne(0,0)$.
Suppose that $\Omega(\lambda,\al_1,\be_1)\otimes\Omega(\lambda,\al_2,\be_2)$ is the tensor product module over $\LL$.
Then $U$ defined by Eq.~\eqref{U} is a nonzero proper submodule of $\Omega(\lambda,\al_1,\be_1)\otimes\Omega(\lambda,\al_2,\be_2)$.
Consequently, $\Omega(\lambda,\al_1,\be_1)\otimes\Omega(\lambda,\al_2,\be_2)$ is a reducible $\LL$-module.

Now we give a necessary condition for the tensor product module $\Omega(\lambda_1,\al_1,\be_1)\otimes\Omega(\lambda_2,\al_2,\be_2)$ over $\LL$ to be irreducible, i.e.,
if the tensor product module $\Omega(\lambda_1,\al_1,\be_1)\otimes\Omega(\lambda_2,\al_2,\be_2)$ over $\LL$ is irreducible, then $\lambda_1\ne\lambda_2$.

 {\bf Acknowledgments:}
 This research work is partially supported by NSF of China (Grants 119310
 \newline09, 12071150, 12271265, 12401032).
 J. Cheng is partially supported by Shandong Provincial Natural Science Foundation of China (Grant No. ZR2022MA072) and undergraduate education reform project of Shandong Normal University (Grant No. 2021BJ054).
 D. Gao is partially supported by China National Postdoctoral Program for Innovative Talents (Grant No. BX20220158), 
 China Postdoctoral Science Foundation (Grant No. 2022M711708) and Nankai Zhide Foundation.

\end{document}